\documentclass[12pt, reqno]{amsart}
\usepackage[left=1in,top=1in,right=1in,bottom=1in]{geometry}
\usepackage{amsmath, amssymb}
\usepackage{amsfonts}
\usepackage{mathrsfs}
\usepackage{bm}
\usepackage[arrow,matrix,curve,cmtip,ps]{xy}
\usepackage{graphicx}
\usepackage{subcaption}
\usepackage{comment}
\usepackage[hidelinks]{hyperref}
\usepackage[utf8]{inputenc}
\usepackage{tikz}
\usepackage{float}
\usepackage{amsthm}
\usepackage{enumitem}
\usepackage{subcaption}

\tikzset{node distance=2cm, auto}

\allowdisplaybreaks

\newtheorem{introthm}{Theorem}

\newtheorem{theorem}{Theorem}[section]
\newtheorem{lemma}[theorem]{Lemma}
\newtheorem{proposition}[theorem]{Proposition}

\newtheorem*{theorem*}{Theorem}
\theoremstyle{remark}

\newtheorem*{remark*}{Remark}

\newtheorem{definition}[theorem]{Definition}

\newtheorem{question}{Question}

\numberwithin{equation}{section}

\title[Eigenvalues of graph Laplacian]
{Flexibility of eigenvalues for graph Laplacians\\ arising from genus 3 surfaces}

\author[A. Erchenko]{Alena Erchenko}
\address{Department of Mathematics, University of Oregon, Eugene, OR}
\email{\href{mailto:erchenko@uoregon.edu}{erchenko@uoregon.edu}}
\urladdr{\url{https://sites.google.com/view/aerchenko}}

\author[D. Jakobson]{Dmitry Jakobson}
\address{Department of Mathematics and Statistics, McGill University, Montr\'eal, Canada}
\email{\href{mailto:dmitry.jakobson@mcgill.ca}{dmitry.jakobson@mcgill.ca}}
\urladdr{\url{https://www.math.mcgill.ca/jakobson/}}

\author[A. Tsypin]{Allison Tsypin}
\address{Department of Mathematics, Dartmouth College,  Hanover, NH}
\email{\href{mailto:allison.l.tsypin.gr@dartmouth.edu}{allison.l.tsypin.gr@dartmouth.edu}}
\urladdr{\url{https://sites.google.com/view/atsypin/}}

\date{}

\begin{document}

\subjclass[2020]{Primary 05C50, 58J50; 
Secondary 05C22, 05C90, 58C40, 52A55} 
\keywords{Graph Laplacian, hyperbolic surface, small eigenvalues, inverse eigenvalue problem}

\begin{abstract}
It is known that the small eigenvalues of the Laplacian of a Riemann surface close to the boundary of the modular space can be well approximated by the eigenvalues of the discrete Laplacian on a certain graph coming from the pair of pants decomposition of the surface. In this paper, we provide a complete description of the sets of eigenvalues of the weighted graph Laplacian for all graphs on four vertices that correspond to a valid pair of pants decomposition of a surface of genus 3.
\end{abstract}
\maketitle

\section{Introduction}\label{sec: introduction}

 Otal and Rosas \cite{OR} showed that on a 
hyperbolic closed surface of genus $g\geq 2$, the \linebreak$(2g-2)$-th Laplace eigenvalue $\lambda_{2g-2}\geq 1/4$. Thus, it is natural to study the first $(2g-3)$ eigenvalues and their multiplicities (see, for example, \cite{FB-P,FB-GM-P-P}). Moreover, it was recently shown (see \cite{AM1,AM2,HMT}) that for random surfaces of large 
genus, the first Laplace eigenvalue $\lambda_1$ approaches $1/4$. On the other hand, it is well-known that as Riemann surface degenerates, $\lambda_1$ approaches $0$. Motivated by the flexibility philosophy in dynamical systems formulated by A. Katok \cite{EK,BKRH}, it seems interesting to characterize 
the sets of $(2g-3)$ positive numbers in $[0,1/4]$ that can be realized as the first 
$(2g-3)$ eigenvalues of the hyperbolic Laplacian on some Riemann surface of genus 
$g$. In this paper, we discuss that question for degenerating Riemann surfaces. 
When Riemann surfaces degenerate, their eigenvalues 
are well approximated by the eigenvalues of the discrete Laplacian 
on certain graphs constructed using the separation of the surface 
into pairs of pants by small geodesics (see, for example, \cite{Colbois,Burger,Bat,SWY}). Motivated by this relation, we thus look at the discrete Laplacian on graphs. In this paper, we mainly focus on graphs on $4$ vertices which correspond to surfaces of genus $3$.

The only graphs on $4$ vertices that correspond to a valid decomposition of genus $3$ Riemann surface into pairs of pants are the following.

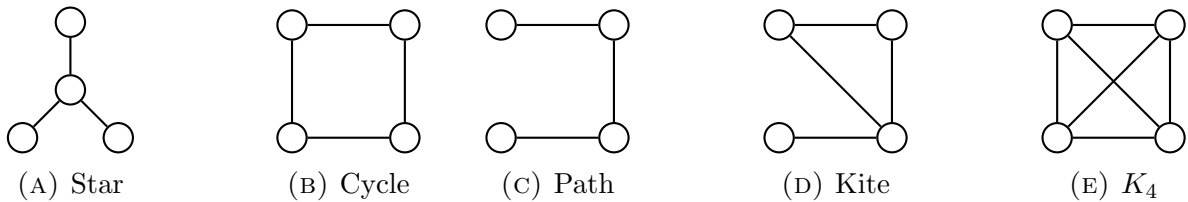
\begin{figure}[htb]
    \centering
    \begin{subfigure}[t]{0.16\textwidth}
        \centering
        \begin{tikzpicture}[node distance={9mm}, thick, main/.style = {draw, circle}] 
        \node[main] (1) {}; 
        \node[main] (2) [above of=1] {}; 
        \node[main] (3) [below right of=1] {}; 
        \node[main] (4) [below left of=1] {}; 
        \draw (1) -- (2); 
        \draw (1) -- (3); 
        \draw (1) -- (4);
        \end{tikzpicture} 
        \caption{Star}
        \label{tikz: Star graph}
    \end{subfigure}
    \hfill
    \begin{subfigure}[t]{0.16\textwidth}
        \centering
        \begin{tikzpicture}[node distance={15mm}, thick, main/.style = {draw, circle}] 
         \node[main] (1) {}; 
         \node[main] (2) [right of=1] {}; 
         \node[main] (3) [below of=2] {}; 
         \node[main] (4) [left of=3] {}; 
         \draw (1) -- (2); 
         \draw (2) -- (3); 
         \draw (3) -- (4);
         \draw (4) -- (1);
         \end{tikzpicture} 
         \caption{Cycle}
         \label{tikz: Cycle graph}
    \end{subfigure}
    \begin{subfigure}[t]{0.16\textwidth}
        \centering
        \begin{tikzpicture}[node distance={15mm}, thick, main/.style = {draw, circle}] 
        \node[main] (1) {}; 
        \node[main] (2) [right of=1] {}; 
        \node[main] (3) [below of=2] {}; 
        \node[main] (4) [left of=3] {}; 
        \draw (1) -- (2); 
        \draw (2) -- (3); 
        \draw (3) -- (4);
     \end{tikzpicture} 
        \caption{Path}
        \label{tikz: Path graph}
    \end{subfigure}
    \hfill
    \begin{subfigure}[t]{0.16\textwidth}
        \centering
        \begin{tikzpicture}[node distance={15mm}, thick, main/.style = {draw, circle}] 
         \node[main] (1) {}; 
         \node[main] (2) [right of=1] {}; 
         \node[main] (3) [below of=2] {}; 
         \node[main] (4) [left of=3] {}; 
         \draw (1) -- (2); 
         \draw (2) -- (3); 
         \draw (1) -- (3);
         \draw (3) -- (4);
         \end{tikzpicture} 
         \caption{Kite}
         \label{tikz: Kite graph}
    \end{subfigure}
    \hfill
    \begin{subfigure}[t]{0.16\textwidth}
        \centering
        \begin{tikzpicture}[node distance={15mm}, thick, main/.style = {draw, circle}] 
        \node[main] (1) {}; 
        \node[main] (2) [right of=1] {}; 
        \node[main] (3) [below of=2] {}; 
        \node[main] (4) [left of=3] {}; 
        \draw (1) -- (2); 
        \draw (2) -- (3); 
        \draw (3) -- (4);
        \draw (4) -- (1);
        \draw (1) -- (3);
        \draw (2) -- (4);
        \end{tikzpicture} 
        \caption{$K_4$}
        \label{tikz: K4 graph}
    \end{subfigure}
    \caption{Graphs on four vertices corresponding to genus 3 Riemann surfaces decomposing into pairs of pants}        
    \label{fig: genus 3 graphs}
\end{figure}

We recall that one of the eigenvalues of the graph Laplacian is necessarily $0$. We provide the full description of the set of the other eigenvalues for the graph Laplacian associated to graphs (A) - (E). See Section~\ref{sec: from surface to graph} for relevant definitions.

\begin{introthm}\label{thm: introthm}
The realizable eigenvalues $(x,y,z)$ of the graph Laplacian for the graphs (A) - (E) are the following.

\begin{enumerate}
\item For the star graph (A), $(x,y,z)$ are such that $x,y,z\geq 0$ and $$(6x^{2}-2xy-2xz+yz)(6y^2-2xy-2yz+xz)(6z^2-2xz-2yz+xy)\leq0.$$
\item For the cycle graph (B), $(x,y,z)$ are such that $x,y,z\geq 0$ and \[\max\{x,y,z\}\geq\frac{1}{2}(x+y+z).\]
\item For the path graph (C), $(x,y,z)$ are such that $x,y,z\geq 0$, $\max\{x,y,z\}\geq\frac{1}{2}(x+y+z)$, and 
\[
\Big(9(x^3+y^3+z^3)-13U+62xyz\Big)^2\leq (3S-2P)^2(9S-14P),
\]

where $S = x^2+y^2+z^2, \qquad P =xy+xz+yz,\qquad U = x^2(y+z) + y^2(x+z)+z^2(x+y)$.
\item For the kite graph (D), $(x,y,z)$ are such that $x,y,z\geq 0$ and at least one of the following inequalities holds 
\begin{itemize}
\item $\sqrt{3}|x-y|\geq x+y$,
\item $\sqrt{3}|x-z|\geq x+z$, or
\item $\sqrt{3}|y-z|\geq y+z$.
\end{itemize}
\item For $K_4$ graph (E), $(x,y,z)$ are such that $x,y,z\geq 0$.
\end{enumerate}
\end{introthm}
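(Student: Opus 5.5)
Throughout I take the weighted graph Laplacian $L_w=\sum_{e=\{i,j\}} w_e (\delta_i-\delta_j)(\delta_i-\delta_j)^T$ with nonnegative edge weights $w_e$, as in Section~\ref{sec: from surface to graph}. The eigenvalues are $0$ together with $(x,y,z)$. The plan is to handle each graph by comparing the three elementary symmetric functions of $(x,y,z)$ with the coefficients of the characteristic polynomial $\det(\lambda-L_w)/\lambda$, which are polynomials in the weights. For a given graph, $(x,y,z)$ is realizable exactly when the system
\[
e_1(x,y,z)=c_1(w),\qquad e_2(x,y,z)=c_2(w),\qquad e_3(x,y,z)=c_3(w)
\]
has a solution $w\ge 0$. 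By the matrix-tree theorem, $c_1=2\sum w_e$, $c_2$ is a quadratic form in the weights, and $c_3=4\cdot(\text{weighted spanning tree sum})$. Everything is homogeneous of degree one, so I would normalize, for instance $x+y+z=1$, and work in a two-dimensional slice.

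For the trees, the star (A) and the path (C), there are exactly three weights, so the coefficient map is a map from $\mathbb{R}^3_{\ge0}$ to $\mathbb{R}^3$.

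For the star with weights $a,b,c$, the characteristic polynomial depends only on $s_1=a+b+c$, $s_2=ab+bc+ca$ and $s_3=abc$:
\[
c_1=2s_1,\qquad c_2=s_1^2+s_2,\qquad c_3=4s_3 .
\]
One solves for $s_1,s_2,s_3$ in terms of $e_1,e_2,e_3$. Realizability then means that the cubic $t^3-s_1t^2+s_2t-s_3$ has three nonnegative real roots. Nonnegativity of the coefficients is automatic, so this reduces to nonnegativity of its discriminant. Written in terms of $x,y,z$, that discriminant should factor as minus the product appearing in (1); I expect this to be checked by direct expansion.

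The path is handled the same way, but $c_2$ is no longer symmetric in the weights, since the middle edge plays a distinguished role. I would therefore eliminate the middle weight. What remains is a quadratic in one outer weight whose real solvability gives the discriminant inequality in (3). Requiring the solution to be nonnegative gives the extra condition $\max\ge\frac12(x+y+z)$, and I would obtain that condition from an interlacing argument.

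For the cycle (B) and the kite (D) there are four weights, so the fibres of the coefficient map are curves, and I would describe the image directly.

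\emph{Necessity for (B).} The inequality $\max\{x,y,z\}\ge\frac12(x+y+z)$ should follow from the fact that $L_w$ of a $4$-cycle is bipartite-symmetric. Concretely, the largest eigenvalue is at least the Rayleigh quotient of the alternating vector $(1,-1,1,-1)$, and that quotient equals $\sum w_e=\frac12\operatorname{tr}L_w$.

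\emph{Sufficiency for (B).} I would exhibit explicit families, for example opposite edges with equal weights $(a,b,a,b)$. For these the spectrum is explicit, namely $\{2a,2b,2a+2b\}$. I would then add a continuity and connectedness argument to fill in the full region.

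\emph{Kite (D).} The triangle with a pendant edge is handled similarly. The conditions $\sqrt3|x-y|\ge x+y$ should come from the triangle part: the eigenvalues of a weighted triangle fill exactly the cone $\sqrt3|x-y|\le x+y$, and its complement is what remains realizable once a pendant vertex is attached.

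For $K_4$ (E), I would show every nonnegative triple is attained. When $(x,y,z)$ satisfies the triangle inequalities, the Hadamard eigenbasis gives off-diagonal entries of the form $\frac14(-x-y+z)$ and its permutations, all of which are nonpositive, so these triples are realized. The remaining triples are covered by the cycle, path and star constructions, which are subgraphs of $K_4$ (a weight of $0$ is allowed). Together these exhaust $\mathbb{R}^3_{\ge0}$.

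I expect the main obstacle to be (3). The elimination producing the stated sextic inequality is computationally heavy. More delicately, one must show that real solvability together with $\max\ge\frac12\sum$ is also sufficient for a solution with all weights nonnegative. I would handle this by tracking the boundary of the image, which is the set where the Jacobian of the coefficient map vanishes or some weight is $0$.
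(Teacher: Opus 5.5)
Your Rayleigh-quotient argument for necessity in (B) is correct and much shorter than the paper's case analysis in Lemma~\ref{lemma: cycle all smaller than 4}. The vector $(1,-1,1,-1)$ gives $\lambda_{\max}\ge\sum w_e=\frac12(x+y+z)$. Since the path is the cycle with one weight $0$, the same argument gives the $\max$-condition in (C); no interlacing is needed. Your $K_4$ argument is also a valid alternative to the suspension induction. In the Hadamard basis, weights $\frac14(y+z-x)$, $\frac14(x+z-y)$, $\frac14(x+y-z)$ on the three perfect matchings realize every triple with $\max\le\frac12\sum$, so (E) reduces to the sufficiency half of (B). This construction is specific to four vertices, whereas the paper's suspension lemma handles every $K_{n+1}$.

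\textbf{Kite.} This part fails. A weighted triangle is $K_3$, which realizes every pair of nonnegative numbers; one edge of weight $1$ already gives $(2,0)$, outside the cone $\sqrt3|x-y|\le x+y$. So your heuristic is false and gives neither direction. What works is to compress to the vectors $(\alpha,\alpha,\beta,\gamma)$ that are constant on $\{v_1,v_2\}$:
\begin{itemize}
\item On these vectors the form is $b(\alpha-\beta)^2+d(\beta-\gamma)^2$, with $b=l_2+l_3$ and $d=l_4$, and the norm is $2\alpha^2+\beta^2+\gamma^2$.
\item Hence the two Ritz values on $\mathbf 1^\perp$ have sum $\frac32 b+2d$ and product $2bd$, and AM--GM gives $\mu_+/\mu_-\ge 2+\sqrt3$.
\item Rayleigh--Ritz gives $\lambda_{\max}/\lambda_{\min}\ge\mu_+/\mu_-$, which is necessity.
\item When $l_2=l_3$ this subspace is invariant and the third eigenvalue is $2l_1+l_2$. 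This is exactly the family (with $c$ maximal) that the paper uses for sufficiency in Lemma~\ref{kite: lemma which are realizable}. You would still need to verify $b+d\le4$, as the paper does.
\end{itemize}
Any necessity proof must use the weights in this way. The paper's Lemma~\ref{kite: lemma which are not attainable} does not. With the correct constant, its second inequality reads $8-(5-2\sqrt3)\Lambda>\sqrt{\mathcal D}$, and then its two inequalities are compatible (e.g.\ $\Lambda=\frac{8}{3}$, $\mathcal D=0$).

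\textbf{Cycle sufficiency.} This is not established, and your (E) rests on it. The family $(a,b,a,b)$ only produces the segment $\max=\frac12\sum$. Connectedness of the image does not make it cover the two-dimensional corner regions. A family that works is $l_2=l_4=r$:
\begin{itemize}
\item Then $2r$ is an eigenvalue, and the other two have sum $2(l_1+l_3)+2r$ and product $4l_1l_3+2r(l_1+l_3)$.
\item Fix $r$ and $l_1+l_3$, and let $l_1l_3$ run over $[0,\frac14(l_1+l_3)^2]$. After checking the two endpoints, the intermediate value theorem sweeps all triples with $\max\ge\frac12\sum$ whose smallest entry is $2r$.
\item The paper's explicit choices $a=4-t$ and $a=s+t$ amount to this, since both give $4d=(4-a)^2$, i.e.\ $l_2=l_4$.
\end{itemize}

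\textbf{Path.} Eliminating the middle weight $m$ does not leave a quadratic. Write $u=p+q$ and $v=pq$ for the outer weights $p,q$. Then $4v=e_2-3mu$, and $m$ must solve the cubic $12m^3-6e_1m^2+4e_2m-4e_3=0$, which is the paper's $F$. Realizability means this cubic has a root in the interval $[a_-,a_+]$ where $u^2\ge4v$. The sextic in (3) is equivalent to $F(a_-)F(a_+)\le0$. Proving that this sign condition is equivalent to the existence of such a root is the monotonicity analysis of the paper's Cases 1--2. ``Tracking the boundary of the image'' does not supply it.

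\textbf{Star.} The correct coefficient is $c_2=3s_2$, not $s_1^2+s_2$. Every two-edge forest of $K_{1,3}$ is a three-vertex path plus an isolated vertex, and it enters the sum of principal $2\times2$ minors with factor $3$. With your formula, $s_2=e_2-e_1^2/4=-1$ at the realizable triple $(2,0,0)$. So neither the automatic nonnegativity nor the claimed factorization holds. With $c_2=3s_2$, your star argument is exactly the paper's.
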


\begin{remark*}The result for the $K_4$ graph is well known, see 
\cite[\S4]{CdV88} or \cite[Thm. 2.5]{FGL}.   However, we provide 
the proof, motivated by the techniques in \cite{CdV88}, for completeness. It also leads us to formulate further questions relating to graph suspensions in Section~\ref{sec: open questions}.  

A problem of describing all attainable eigenvalues for weighted graph Laplacian on arbitrary 
graphs has been considered in \cite{FGL,CFGL}.  
A general description of the attainable eigenvalues on a $K_{1, k}$ graph (a generalized version of our star graph) is given in \cite[Thm. 2.6 and subsequent examples]{FGL}. In particular, they deduce an answer for our star graph (graph $K_{1,3}$) in Example 2.8 from the general expression, but the methods used are different from ours. Other graphs on 4 vertices have been considered in \cite[\S 4.3]{CFGL} where they find the potential boundaries of attainable eigenvalues. In our work, using methods independent of theirs, we show that the actual boundaries for the realizable eigenvalues for the weighted graph Laplacian on 4 vertices have the same shape as their potential boundaries.
\end{remark*}
Note that we allow zero edge weights, corresponding to  ``deleting" the relevant edge. Moreover, we do not order eigenvalues so we consider the $n$-tuples of eigenvalues. Also, we observe that if all edge weights are multiplied by a positive constant, the whole Laplacian matrix will be multiplied by the same constant, and thus all eigenvalues will be scaled by this constant. Accordingly, we can always scale the set of eigenvalues by a positive constant $t$: the set $\{\lambda_0, \lambda_1, \lambda_2, \ldots, \lambda_n\}$ can be attained on a given graph if and only if the set $\{t\lambda_0, t\lambda_1, t\lambda_2, t\ldots, t\lambda_n\}$ can be attained on that graph. 
This allows us to apply a normalization condition: since we can scale at will, we can impose a fixed sum of the eigenvalues. For graphs with four vertices, we will sometimes impose that the eigenvalues must sum to 8. In particular, the sets of the realizable eigenvalues with normalization $x+y+z=c>0$ for the graphs (A) - (E) are shown below in gray (positive eigenvalues) and black (at least one of the eigenvalues is $0$).

\bigskip
\begin{tikzpicture}[scale=1.5]
\filldraw[fill=gray!50,draw=gray!50] plot[smooth, tension=0.3] coordinates {
    (-1,0) (-0.5,0) (-0.45, 0.289) (-0.48, 0.243) (-0.6, 0.15)
    (-0.79,0.07) (-1,0)};
     \begin{scope}[xscale=-1]
\filldraw[fill=gray!50,draw=gray!50] plot[smooth, tension=0.3] coordinates {
    (-1,0) (-0.5,0) (-0.45, 0.289) (-0.48, 0.243) (-0.6, 0.15)
    (-0.79,0.07) (-1,0)};
     \end{scope}

\begin{scope}[rotate around={120:(0,0.577)}]
\filldraw[fill=gray!50,draw=gray!50] plot[smooth, tension=0.3] coordinates {
    (-1,0) (-0.5,0) (-0.45, 0.289) (-0.48, 0.243) (-0.6, 0.15)
    (-0.79,0.07) (-1,0)};
     \begin{scope}[xscale=-1]
\filldraw[fill=gray!50,draw=gray!50] plot[smooth, tension=0.3] coordinates {
    (-1,0) (-0.5,0) (-0.45, 0.289) (-0.48, 0.243) (-0.6, 0.15)
    (-0.79,0.07) (-1,0)};
     \end{scope}
\end{scope}
\begin{scope}[rotate around={240:(0,0.577)}]
\filldraw[fill=gray!50,draw=gray!50] plot[smooth, tension=0.3] coordinates {
    (-1,0) (-0.5,0) (-0.45, 0.289) (-0.48, 0.243) (-0.6, 0.15)
    (-0.79,0.07) (-1,0)};
     \begin{scope}[xscale=-1]
\filldraw[fill=gray!50,draw=gray!50] plot[smooth, tension=0.3] coordinates {
    (-1,0) (-0.5,0) (-0.45, 0.289) (-0.48, 0.243) (-0.6, 0.15)
    (-0.79,0.07) (-1,0)};
     \end{scope}
\end{scope}
\draw[draw=black, very thick] 
     (-1,0) -- (1,0) -- (0,1.732) -- cycle;
\node at (0,-1) {(A) Eigenvalues of the star graph};
\end{tikzpicture}
\hspace{2cm}
\begin{tikzpicture}[scale=1.5]
\filldraw[fill=gray!50,draw=black, thick] 
    (-1,0) -- (1,0) -- (0,1.732) -- cycle;
\filldraw[fill=white,draw=gray!50, ultra thin] 
    (0,0) -- (-0.5,0.866) -- (0.5,0.866) -- cycle;
\node at (0,-1) {(B) Eigenvalues of the cycle graph};
\end{tikzpicture}

\begin{tikzpicture}[scale=1.5]
    \filldraw[fill=gray!50,draw=gray!50] plot[smooth, tension=0.3] coordinates {
    (-1,0) (-0.79,0.07) (-0.68,0.1) (-0.46,0.16)  (-0.22,0.166) (-0.128,0.135) (-0.086,0.1065)(0,0) (-1,0)};
    \begin{scope}[xscale=-1]
   \filldraw[fill=gray!50,draw=gray!50] plot[smooth, tension=0.3] coordinates {
    (-1,0) (-0.79,0.07) (-0.68,0.1) (-0.46,0.16)  (-0.22,0.166) (-0.128,0.135) (-0.086,0.1065)(0,0) (-1,0)};
  \end{scope}
\begin{scope}[rotate around={120:(0,0.577)}]
\filldraw[fill=gray!50,draw=gray!50] plot[smooth, tension=0.3] coordinates {
    (-1,0) (-0.79,0.07) (-0.68,0.1) (-0.46,0.16)  (-0.22,0.166) (-0.128,0.135) (-0.086,0.1065)(0,0) (-1,0)};
    \begin{scope}[xscale=-1]
   \filldraw[fill=gray!50,draw=gray!50] plot[smooth, tension=0.3] coordinates {
    (-1,0) (-0.79,0.07) (-0.68,0.1) (-0.46,0.16)  (-0.22,0.166) (-0.128,0.135) (-0.086,0.1065)(0,0) (-1,0)};
  \end{scope}
\end{scope}

  \begin{scope}[rotate around={240:(0,0.577)}]
\filldraw[fill=gray!50,draw=gray!50] plot[smooth, tension=0.3] coordinates {
    (-1,0) (-0.79,0.07) (-0.68,0.1) (-0.46,0.16)  (-0.22,0.166) (-0.128,0.135) (-0.086,0.1065)(0,0) (-1,0)};
    \begin{scope}[xscale=-1]
   \filldraw[fill=gray!50,draw=gray!50] plot[smooth, tension=0.3] coordinates {
    (-1,0) (-0.79,0.07) (-0.68,0.1) (-0.46,0.16)  (-0.22,0.166) (-0.128,0.135) (-0.086,0.1065)(0,0) (-1,0)};
  \end{scope}
\end{scope}
     \draw[draw=black, very thick] 
    (-1,0) -- (1,0) -- (0,1.732) -- cycle;
\node at (0,-1) {(C) Eigenvalues of the path graph};
\end{tikzpicture}
\hspace{2cm}
\begin{tikzpicture}[scale=1.5]
\filldraw[fill=gray!50, draw=gray!, ultra thin] 
    (-1,0) -- (1,0) -- (0.789,0.366) -- cycle;
    \begin{scope}[xscale=-1]
   \filldraw[fill=gray!50, draw=gray!50, ultra thin] 
    (-1,0) -- (1,0) -- (0.789,0.366) -- cycle;
  \end{scope}

  \begin{scope}[rotate around={120:(0,0.577)}]
       \filldraw[fill=gray!50, draw=gray!50, ultra thin] 
    (-1,0) -- (1,0) -- (0.789,0.366) -- cycle;
    \begin{scope}[xscale=-1]
   \filldraw[fill=gray!50, draw=gray!50, ultra thin] 
    (-1,0) -- (1,0) -- (0.789,0.366) -- cycle;
    \end{scope}
\end{scope}

  \begin{scope}[rotate around={240:(0,0.577)}]
       \filldraw[fill=gray!50, draw=gray!50, ultra thin] 
    (-1,0) -- (1,0) -- (0.789,0.366) -- cycle;
    \begin{scope}[xscale=-1]
   \filldraw[fill=gray!50, draw=gray!50, ultra thin] 
    (-1,0) -- (1,0) -- (0.789,0.366) -- cycle;
    \end{scope}
\end{scope}
\draw[draw=black, very thick] 
    (-1,0) -- (1,0) -- (0,1.732) -- cycle;
\node at (0,-1) {(D) Eigenvalues of the kite graph};
\end{tikzpicture}

\begin{center}
\begin{tikzpicture}[scale=1.5]
\filldraw[fill=gray!50, draw=black, very thick] 
    (-1,0) -- (1,0) -- (0,1.732) -- cycle;
\node at (0,-1) {(E) Eigenvalues of the $K_4$ graph};
\end{tikzpicture}
\end{center}

\subsection*{Organization of the paper} In Section~\ref{sec: from surface to graph}, we recall the construction of associate graph to a hyperbolic closed surface and the relations of the corresponding Laplacians. In the further sections we prove the statement of Theorem~\ref{thm: introthm}. More precisely, we consider the star graph in Section~\ref{sec: star graph}, the cycle graph in Section~\ref{sec: cycle graph}, the path graph in Section~\ref{sec: path graph}, the kite graph in Section~\ref{sec: kite graph}, and, finally, the complete graph in Section~\ref{sec: complete graph}. In Section~\ref{sec: open questions}, we formulate related open questions that might be of independent interest.

\subsection*{Acknowledgments}
A. E. was supported by NSF grant DMS-2247230 and DMS-2552860. D. J. was supported by NSERC, FQRNT and Peter Redpath 
fellowship. A.T. was supported by NSERC and FRQNT. 
The authors would like to thank Y. Colin de Verdi\`ere, 
M. Fortier Bourque, M. Karpukhin, and B. Petri
for interesting discussions.

\section{From a surface to a graph}\label{sec: from surface to graph} 

Consider a hyperbolic surface $S$ of genus $g\geq 2$. We say $\Lambda\subset S$ is a partition of $S$ if it is a union of simple closed pairwise non-intersecting geodesics. We associate an unoriented graph $\Gamma =(V,E)$ to $\Lambda$ in the following way. The set of vertices $V$ is the set of connected components of $S\setminus\Lambda$ with $n+1$ elements. The set of edges $E$ is obtained from the geodesics in $\Lambda$. To be more precise, two vertices in $V$ are connected by an edge if the connected components of $S\setminus \Lambda$ corresponding to them are joined by a geodesic in $\Lambda$. Let $v_i$ be the area of $V_i$. To each edge between vertices $V_i$ and $V_j$, we associate a weight which is equal to the sum of lengths of geodesics connecting the pieces associated to $V_i$ and $V_j$. We denote by $i(p)$, $j(p)$ the indices of the end points of the edge $e_p$ in $\Gamma$, where $1\leq p\leq r$. On the space $V^*\cong\mathbb R^{n+1}$ of function on $\Gamma$ with the scalar product $<F,G>=\sum\limits_{i=1}^{n+1}v_iF_iG_i$, we define a graph Laplacian (a combinatorial Laplacian) as the quadratic form $Q(F)=\frac{1}{\pi}\sum\limits_{p=1}^r l(e_p)(F_{j(p)}-F_{i(p)})$, where $l(e_p)$ is the weight of the edge $e_p$ for all $p$. The eigenvalues of the graph Laplacian are the eigenvalues of the eigenvalues of the associated matrix of the quadratic form.

In this paper, we concentrate on partitions $\Lambda$ so that each connected component of $S\setminus\Lambda$ is either a pair of pants or a pair of pants with trouser legs connected. Such a decomposition has $2g-2$ connected components. Thus, for genus $3$ surfaces we obtain graphs on $4$ vertices of form (A) - (E) as in Section~\ref{sec: introduction}.

The eigenvalues of the graph Laplacian associated to the decomposition above into pair of pants are closely related to the eigenvalues of the Laplacian of the initial hyperbolic metric. To be more precise, in \cite{Colbois}, Colbois showed the following. Fix a closed surface $S$ of genus $g\geq 2$. Consider a family of hyperbolic metrics $\{g_t\}_{0<t\leq 1}$ on $S$. Let $\Lambda$ be the partition of $(S,g_1)$ into pair of pants as above. Assume that the metrics $g_t$ are such that the $g_t$-lengths of the geodesics (for $g_t$ and $g_1$) in $\Lambda$ are $t$ multiples of $g_1$-lengths of those geodesics (we scale the length of geodesics but preserve the twist parameter). Let $0<\sigma_1(t)\leq\ldots\leq\sigma_{2g-3}(t)$ be the first $2g-3$ nonzero eigenvalues of the Laplacian associated to $g_t$. Denote by $0<\lambda_1\leq\ldots\leq\lambda_{2g-3}$ be the eigenvalues of the graph Laplacian associated to $(S,g_1)$ with the partition $\Lambda$. By Th\'eor\`eme in \cite{Colbois},
\[\lim_{t\rightarrow 0}\frac{\sigma_k(t)}{t}=\lambda_k\qquad\text{for}\quad 1\leq k\leq 2g-3.\]

\section{Star graph}\label{sec: star graph}
The star graph is a four-vertex graph with one vertex having degree three and all others having degree one (i.e. the bipartite graph $K_{1,3}$). In this section we show that the eigenvalue equations are symmetric, allowing us to express the realizable eigenvalues of the star graph as the solutions to a cubic equation.

\begin{theorem}(compare with \cite[Example 2.8]{FGL})\label{theorem_star}
    The triple of eigenvalues attained by the star graph on four vertices are precisely $(x,y,z)$ satisfying $$(6x^{2}-2xy-2xz+yz)(6y^2-2xy-2yz+xz)(6z^2-2xz-2yz+xy)\leq0,\quad \text{where}\quad x,y,z\geq 0.$$
\end{theorem}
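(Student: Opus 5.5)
\emph{Reduction to a linear algebra problem.} Every vertex of the star graph is a pair of pants, so all vertex areas $v_i$ equal $2\pi$. The matrix of the graph Laplacian is therefore a fixed positive multiple of the ordinary weighted Laplacian $L$ of $K_{1,3}$. By the scaling remark in the introduction, that constant can be ignored. Let the centre be vertex $0$ and the leaves $1,2,3$, with edge weights $a,b,c\ge 0$. Then
\[
L=\begin{pmatrix} a+b+c & -a & -b & -c\\ -a & a & 0 & 0\\ -b & 0 & b & 0\\ -c & 0 & 0 & c\end{pmatrix}.
\]
Since $L$ is positive semidefinite, $x,y,z\ge 0$ is necessary. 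The plan is to compute $\det(\lambda I-L)$ explicitly. Expanding along the first row, or equivalently eliminating the leaf coordinates through $f_i=a_if_0/(a_i-\lambda)$, gives
\[
\det(\lambda I-L)=\lambda\bigl(\lambda^3-2e_1\lambda^2+3e_2\lambda-4e_3\bigr),
\]
where $e_1,e_2,e_3$ are the elementary symmetric polynomials in $a,b,c$. I will verify this as a polynomial identity by direct expansion, so that it also holds when some $a_i$ coincides with an eigenvalue or vanishes.

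\emph{Characterisation.} A triple $(x,y,z)$ is realised exactly when there are $a,b,c\ge 0$ satisfying
\[
x+y+z=2e_1,\qquad xy+yz+zx=3e_2,\qquad xyz=4e_3.
\]
Write $\sigma_1,\sigma_2,\sigma_3$ for the elementary symmetric polynomials of $(x,y,z)$. The condition is then that the cubic
\[
q(t)=t^3-\tfrac{\sigma_1}{2}t^2+\tfrac{\sigma_2}{3}t-\tfrac{\sigma_3}{4}
\]
has three nonnegative real roots $a,b,c$, counted with multiplicity; zero weights are allowed.

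When $x,y,z\ge 0$, the coefficients of $q$ alternate in sign or vanish. So $q(t)$ has no negative roots: for $t<0$ every term is $\le 0$, and the $t^3$ term is strictly negative. Hence, given $x,y,z\ge0$, realisability is equivalent to all roots of $q$ being real. That in turn is equivalent to the discriminant condition $\operatorname{Disc}(q)\ge 0$.

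\emph{Main step.} What remains, and is the main computational obstacle, is to show that
\[
\operatorname{Disc}(q)=-C\,(6x^2-2xy-2xz+yz)(6y^2-2xy-2yz+xz)(6z^2-2xz-2yz+xy)
\]
for some constant $C>0$.

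I would first handle the question of why such a factorisation exists. The discriminant vanishes exactly when $q$ has a double root, say $a=b$. In that case $L$ has the eigenvector supported on leaves $1,2$ with opposite signs, with eigenvalue $a$. So one eigenvalue, say $x$, equals $a$, and the remaining two eigenvalues satisfy $y+z=2(2a+c)-a$ and $yz\cdot a=4a^2c$. Eliminating $c$ should produce exactly $6x^2-2xy-2xz+yz=0$, which explains one factor; the other two follow by symmetry.

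I would then check that both sides are homogeneous of degree $6$ and symmetric. Comparing their values at one or two test points, for example $(x,y,z)=(1,1,0)$ and $(1,0,0)$, then pins down $C$ and its sign. This confirms that $\operatorname{Disc}(q)\ge0$ is equivalent to the stated product being $\le 0$, which proves the theorem.
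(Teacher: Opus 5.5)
Your outline follows the paper's proof. It uses the same characteristic polynomial $\lambda(\lambda^3-2e_1\lambda^2+3e_2\lambda-4e_3)$, the same cubic for the weights obtained from Vieta, and the same criterion: all roots real if and only if the discriminant is $\ge 0$. Your sign-alternation argument, showing that $q$ has no negative roots when $x,y,z\ge 0$, makes explicit something the paper only asserts (that nonnegative eigenvalues force nonnegative weights). For the main step the paper simply substitutes $B,C,D$ into $18ABCD-4B^3D+B^2C^2-4AC^3-27A^2D^2$ and expands. Your replacement for that expansion does not close as written.

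\textbf{Where the main step fails.} Symmetry and homogeneity of degree $6$ do not determine a polynomial up to one constant, because symmetric sextics in three variables form a $7$-dimensional space. So one or two test points prove nothing unless you have already shown that $Q_xQ_yQ_z$ divides $\operatorname{Disc}(q)$, where $Q_x=6x^2-2xy-2xz+yz$ and similarly for $Q_y,Q_z$.

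Your double-root computation gives only the implication $\operatorname{Disc}(q)=0\Rightarrow Q_xQ_yQ_z=0$. That is a statement about zero sets, not about divisibility. It is also phrased through an eigenvector of $L$, so it covers only realisable real data, and it divides by $a$, which fails at $a=0$.

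\textbf{The repair.} Prove the converse direction instead.
\begin{itemize}
\item If $Q_x=0$, Vieta shows directly that $q(t)=(t-x)^2\bigl(t-\tfrac{y+z-3x}{2}\bigr)$. Hence $\operatorname{Disc}(q)$ vanishes on the whole complex quadric $\{Q_x=0\}$.
\item $Q_x$ is a nondegenerate ternary quadratic form (its Gram determinant is $-\tfrac12$), so it is irreducible. Therefore $Q_x\mid\operatorname{Disc}(q)$.
\item The three $Q$'s are pairwise non-associate, so their product divides $\operatorname{Disc}(q)$. Both have degree $6$, so they are proportional.
\item One test point then fixes the constant. At $(1,1,0)$ you get $q=t(t^2-t+\tfrac13)$ and $\operatorname{Disc}(q)=-\tfrac1{27}$, while $Q_xQ_yQ_z=16$.
\end{itemize}
This gives $\operatorname{Disc}(q)=-\tfrac{1}{432}Q_xQ_yQ_z$, which is exactly what the theorem needs. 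Your second test point $(1,0,0)$ gives $0=0$ and carries no information.
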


\begin{proof}
Consider a star graph where vertices $v_1, v_2, v_3$ each have a single attached edge of weight $l_1, l_2, l_3$, respectively, with the other end being $v_4$. Then, the weighted graph Laplacian has form

\begin{align*}\label{star_matrix}
\begin{bmatrix} 
l_1 & 0 & 0 & -l_1 \\
0 & l_2 & 0 & -l_2 \\
0 & 0 & l_3 & -l_3 \\
-l_1 & -l_2 & -l_3 & l_1+l_2+l_3
\end{bmatrix}.
\end{align*}

The characteristic equation of this matrix is 
\begin{equation*}
\lambda^4-2(l_1+l_2+l_3)\lambda^3+3(l_1l_2+l_1l_3+l_2l_3)\lambda^2-4l_1l_2l_3\lambda=0.
\end{equation*}

Therefore, $\lambda_0=0$ is an eigenvalue and the other eigenvalues $\lambda_1, \lambda_2$, and $\lambda_3$ satisfy the equation
\begin{equation}\label{star_equat}
\lambda^3-2(l_1+l_2+l_3)\lambda^2+3(l_1l_2+l_1l_3+l_2l_3)\lambda-4l_1l_2l_3=0.
\end{equation}

Thus, we have
\begin{align}
&\lambda_1+\lambda_2+\lambda_3 = 2(l_1+l_2+l_3),\nonumber\\
&\lambda_1\lambda_2+\lambda_1\lambda_3+\lambda_2\lambda_3 = 3(l_1l_2+l_1l_3+l_2l_3),\label{star_system}\\
&\lambda_1\lambda_2\lambda_3 = 4l_1l_2l_3.\nonumber
\end{align}

These equalities give us three symmetric polynomials in terms of $l_1, l_2, l_3$, so we can rewrite them as a cubic equation whose solutions are $l_1, l_2, l_3$:

\begin{align}
    L^{3}-\frac{1}{2}\left(\lambda_1+\lambda_2+\lambda_3\right)L^{2}+\frac{1}{3}\left(\lambda_1\lambda_2+\lambda_1\lambda_3+\lambda_2\lambda_3\right)L-\frac{1}{4}\left(\lambda_1\lambda_2\lambda_3\right)=0.\label{star_length_cubic}
\end{align}

Then, attainable eigenvalues exactly correspond to the case where this equation has three real roots, i.e. when the discriminant of the cubic equation is nonnegative, with the additional constraint that all $l_i$ and $\lambda_i$, $i=1,2,3$, are nonnegative. 

The cubic equation has the form $AL^3+BL^2+CL+D=0$ with 
\begin{alignat*}{2}
   &A = 1,\qquad &&B = -\frac{1}{2}\left(\lambda_1+\lambda_2+\lambda_3\right),\\ &C = \frac{1}{3}\left(\lambda_1\lambda_2+\lambda_1\lambda_3+\lambda_2\lambda_3\right),\qquad &&D = -\frac{1}{4}\left(\lambda_1\lambda_2\lambda_3\right). 
\end{alignat*}
Using the formula for the discriminant of a cubic equation, namely, \[18ABCD-4B^3D+B^2C^2-4AC^3-27A^2D^2,\] and renaming $(\lambda_1, \lambda_2, \lambda_3)=(x,y,z),$ we get the constraint that 
\begin{align*}
    (6x^{2}-2xy-2xz+yz)(6y^2-2xy-2yz+xz)(6z^2-2xz-2yz+xy)\leq0.
\end{align*}

Since the eigenvalues of the Laplacian are always nonnegative, we also have constraints $x,y,z\geq0$. Finally, we note that if all $\lambda_i$, $i=1,2,3$, are nonnegative, the system of equations \eqref{star_system} imply all $l_i$, $i=1,2,3$, must also be nonnegative. As a result, negative weights do not arise. Hence, the eigenvalues of the star graph are exactly those described in Theorem \ref{theorem_star}.

\end{proof}

\section{Cycle}\label{sec: cycle graph}
Consider the cycle graph on four vertices, i.e., let $v_1,v_2,v_3,v_4$ be the vertices, then the set of edges consists of $v_4v_1$ and $v_iv_{i+1}$, $i=1,2,3$. To describe the realizable eigenvalues on the cycle graph, we introduce a convenient parametrization and express the eigenvalues in terms of these parameters.

\begin{theorem}\label{theorem_cycle}
    The triple of eigenvalues attained by the cycle graph on four vertices are precisely $(x,y,z)$ where $x,y,z\geq0$ and one of $x,y,z$ is $\geq \frac{1}{2}(x+y+z)$.
\end{theorem}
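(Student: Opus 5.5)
The plan is to prove necessity with a single test vector in the Rayleigh quotient, and sufficiency with an explicit one-parameter-reduced family of weights: imposing a reflection symmetry makes one eigenvalue explicit, and the other two become the roots of a quadratic.

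\textbf{Setup.} Label the edge weights $a=l(v_1v_2)$, $b=l(v_2v_3)$, $c=l(v_3v_4)$, $d=l(v_4v_1)$, all $\geq 0$, and work with the weighted Laplacian matrix $L$ as in Section~\ref{sec: star graph}. Let $0,\lambda_1,\lambda_2,\lambda_3$ be its eigenvalues. Two coefficients of the characteristic polynomial are immediate:
\begin{itemize}
\item the trace gives $\lambda_1+\lambda_2+\lambda_3=2(a+b+c+d)$;
\item by the matrix--tree theorem, $\lambda_1\lambda_2\lambda_3=4\tau$, where $\tau=abc+abd+acd+bcd$ is the weighted count of spanning trees.
\end{itemize}

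\textbf{Necessity.} The cycle $C_4$ is bipartite, so I test the Rayleigh quotient on the alternating vector $f=(1,-1,1,-1)$. Every edge joins vertices of opposite sign, so each edge contributes $l\cdot 2^2$. Hence $\langle Lf,f\rangle=4(a+b+c+d)$ and $\|f\|^2=4$. Therefore
\[
\max\{\lambda_1,\lambda_2,\lambda_3\}\geq a+b+c+d=\tfrac12(\lambda_1+\lambda_2+\lambda_3).
\]
Together with $\lambda_i\geq 0$, this is exactly the necessary condition in Theorem~\ref{theorem_cycle}.

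\textbf{Sufficiency: the symmetric family.} I restrict to weights with $b=d$, i.e.\ invariant under the reflection swapping $v_2$ and $v_4$.
\begin{itemize}
\item The antisymmetric vector $(0,1,0,-1)$ is then an eigenvector with eigenvalue $b+d=2b$.
\item Using the trace, the other two nonzero eigenvalues $\mu,\nu$ satisfy $\mu+\nu=2(a+2b+c)-2b=2(a+b+c)$.
\item Using the matrix--tree product, $\tau=b(2ac+ab+bc)$, so $\mu\nu=4\tau/(2b)=2\bigl(2ac+b(a+c)\bigr)$. When $b=0$ this should be read as the limit, or checked directly.
\end{itemize}
Now take a target triple $(x,y,z)$ and, after relabeling, let $z$ be its smallest entry. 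I set $b=z/2$ and $s=a+c$. Matching $\mu+\nu=x+y$ and $\mu\nu=xy$ forces
\[
s=\tfrac12(x+y-z),\qquad ac=\tfrac14(xy-zs).
\]
Nonnegative real $a,c$ with this sum and product exist if and only if $s\geq 0$, $ac\geq 0$ and $s^2\geq 4ac$.

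\textbf{Checking the three inequalities.} This is the step I expect to be the main obstacle; it reduces to elementary inequalities.
\begin{itemize}
\item $s\geq 0$ holds because $z\leq x$, so $x+y-z\geq y\geq 0$.
\item $ac\geq 0$ is equivalent to $2xy\geq z(x+y)-z^2$. This holds because $xy\geq zx$ and $xy\geq zy$ when $z$ is minimal.
\item $s^2\geq 4ac$ is equivalent to $s(s+z)\geq xy$, i.e.\ $(x+y)^2-z^2\geq 4xy$, i.e.\ $|x-y|\geq z$. The hypothesis says the largest entry, say $x$, satisfies $x\geq\frac12(x+y+z)$, which is exactly $x-y\geq z$.
\end{itemize}
Thus every admissible triple is realized, including those with zero entries, since zero weights are allowed. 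This completes the proof.
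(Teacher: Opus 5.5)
Your argument is correct in substance, but one stated fact is false. With only $b=d$, the vector $(0,1,0,-1)$ is not an eigenvector. Indeed $L(0,1,0,-1)=(d-a,\,a+b,\,c-b,\,-(c+d))$, which is a multiple of $(0,1,0,-1)$ only when $a=d$ and $b=c$. That is the condition for the reflection $v_2\leftrightarrow v_4$ to be a symmetry, and then the eigenvalue is $a+b$, not $2b$.

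Setting $b=d$ equalizes the opposite edges $v_2v_3$ and $v_4v_1$. The symmetry this gives is the reflection $v_1\leftrightarrow v_2$, $v_3\leftrightarrow v_4$. The eigenvector for $2b$ is the cut vector $(1,1,-1,-1)$, since $L(1,1,-1,-1)=(2d,2b,-2b,-2d)$.

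Everything downstream only uses that $2b$ is an eigenvalue on the orthogonal complement of the constants, so your conclusions survive. Better still, on the antisymmetric vectors $p(1,-1,0,0)+q(0,0,1,-1)$ the Laplacian acts by $\begin{bmatrix}2a+b&b\\ b&b+2c\end{bmatrix}$. Its trace $2(a+b+c)$ and determinant $4ac+2b(a+c)$ give your formulas for $\mu+\nu$ and $\mu\nu$ directly. This needs neither the matrix--tree theorem nor a division by $b$. The rest checks out: the choice $b=d=z/2$, the three feasibility inequalities, and the reduction of $s^2\ge 4ac$ to $|x-y|\ge z$.

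With that repair, your route differs from the paper's mainly in the necessity direction. The paper proves it in Lemma~\ref{lemma: cycle all smaller than 4}. It parametrizes by $l_1+l_3$, $l_2+l_4$, $l_1l_3$, $l_2l_4$, solves for one parameter, and runs a multi-case sign analysis. Your Rayleigh quotient with the bipartition vector $(1,-1,1,-1)$ gives $\lambda_{\max}\ge \sum l_i=\tfrac12\,\mathrm{tr}\,L$ in one line. The same argument works verbatim for any bipartite graph on four vertices, so it also yields the first condition for the path graph.

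For sufficiency, your construction actually coincides with the paper's. In both of its choices, $a=4-t$ and $a=s+t$ in its notation, one gets $4d=b^2$, i.e.\ $l_2=l_4=\tfrac12\Lambda_{\min}$, which is exactly your $b=d=z/2$. The paper guesses these values and verifies the constraints by hand. Your symmetry reduction explains why this family works and makes the feasibility condition appear as the discriminant inequality $|x-y|\ge z$, which is precisely the hypothesis.
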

\begin{proof}
We show the triple $(\lambda_1, \lambda_2, \lambda_3)$ of eigenvalues (we exclude $\lambda_0=0$) with $\lambda_1+\lambda_2+\lambda_3=8$ and $\lambda_j\geq 0$ for all $j=1,2,3$ is realizable by the cycle graph if and only if $\lambda_i\geq 4$ for some $i\in\{1,2,3\}$.

    Let $\{l_1, l_2, l_3, l_4\}$ be the weights of edges in the considered cycle graph on four vertices. Then, the weighted graph Laplacian has form

    \begin{align*}
    \begin{bmatrix}
    l_1+l_4 & -l_1 & 0 & -l_4 \\
    -l_1 & l_1+l_2 & -l_2 & 0 \\
    0 & -l_2 & l_2+l_3 & -l_3 \\
    -l_4 & 0 & -l_3 & l_3+l_4 \\
    \end{bmatrix}.
    \end{align*}

Calculating the characteristic equation, we get that, excluding the necessarily $0$ eigenvalue, the rest of eigenvalues must satisfy
\begin{align*}
    \lambda^3 - 2 \lambda^2 (l_1 + l_2 + l_3 + l_4) + \lambda (3(l_1 l_2 + l_2 l_3 + l_3 l_4 + l_4 l_1) &+ 4(l_1 l_3 + l_2 l_4))\\ &- 4 (l_1 l_2 l_3 + l_1 l_2 l_4 + l_1 l_3 l_4 + l_2 l_3 l_4) = 0.
\end{align*}
Thus, the roots $\lambda_1, \lambda_2, \lambda_3$ of this cubic equation satisfy the following system of equations.

\begin{align}
    \lambda_1+\lambda_2+\lambda_3 & = 2(l_1+l_2+l_3+l_4), \nonumber\\
    \lambda_1\lambda_2 + \lambda_2\lambda_3 + \lambda_1\lambda_3 & = 3 (l_1 l_2 + l_2 l_3 + l_3 l_4 + l_4 l_1) + 4 (l_1 l_3 + l_2 l_4), \label{eq: cycle eigenvalues}\\
    \lambda_1\lambda_2\lambda_3 & = 4 (l_1 l_2 l_3 + l_1 l_2 l_4 + l_1 l_3 l_4 + l_2 l_3 l_4).\nonumber
\end{align}

Let 
\begin{equation}\label{cycle: parameters vs lengths}
l_1+l_3=a,  \hspace{10pt}l_2+l_4=b,\hspace{10pt}l_1l_3=c,  \hspace{10pt}l_2l_4=d.
\end{equation}
We want to express our constraints on the roots using $a,b,c$, and $d$. Note that the new parameters are symmetric polynomials in terms of the weights. 
In order to make sure we can recover positive real weights $l_1, l_2, l_3, l_4$ from $a,b,c,d$, it is necessary and sufficient to have the following inequalities hold.
\begin{equation}\label{cycle: constraints}
a,b,c,d\geq 0, \hspace{10pt}4c\leq a^2,  \hspace{10pt}4d\leq b^2.
\end{equation}
Our normalization $\lambda_1+\lambda_2+\lambda_3=8$ translates into $l_1+l_2+l_3+l_4=4$, i.e., $b=4-a$. As a result, the system of equations \eqref{eq: cycle eigenvalues} can be written in the following form.

\begin{align}
    \lambda_1+\lambda_2+\lambda_3 & = 2(a+b)=8, \nonumber\\
    \lambda_1\lambda_2 + \lambda_2\lambda_3 + \lambda_1\lambda_3 &=3a(4-a)+4(c+d),\label{cycle: eigenvalues vs parameters}\\
    \lambda_1\lambda_2\lambda_3 & = 4(ad+(4-a)c).\nonumber
\end{align}

Let $\Lambda_1=\Lambda$ be one of of our eigenvalues. In particular, $\Lambda$ is a root of the equation \begin{align}
    \lambda^3-8\lambda^2+(3a(4-a)+4(c+d))\lambda-4(ad+(4-a)c) = 0. \label{cycle: Lambda_cubic}
\end{align}

Thus, factoring out $\lambda-\Lambda$ in the polynomial above, we obtain that the other roots of \eqref{cycle: Lambda_cubic} must be the zeros of the quadratic polynomial $\lambda^2+(\Lambda-8)\lambda+3a(4-a)+4(c+d)+(\Lambda-8)\Lambda$. Therefore, the other two eigenvalues of the considered graph Laplacian are 
\begin{align}
    \Lambda_{2,3} & = \frac{1}{2}\Bigg ((8-\Lambda)\pm\sqrt{(\Lambda-8)^2-4(3a(4-a)+4c+4d+\Lambda(\Lambda-8))}\Bigg ).\label{cycle: L2,3 expression} 
\end{align}

 \begin{lemma}\label{lemma: cycle all smaller than 4}
     If $0\leq \Lambda_i<4$ for $i=1,2,3$ and $\Lambda_1+\Lambda_2+\Lambda_3=8$, the triple ($\Lambda_1, \Lambda_2, \Lambda_3)$ is not attainable.
 \end{lemma}
\begin{proof}
    Suppose $\Lambda, \Lambda_2, \Lambda_3 <4$. Therefore, 
\begin{align}
    &\frac{1}{2}(8-\Lambda+\sqrt{(\Lambda-8)^2-4(3a(4-a)+4c+4d+\Lambda(\Lambda-8)}))<4\nonumber,\\
    &(\Lambda-8)^2-4(3a(4-a)+4c+4d+\Lambda(\Lambda-8))<\Lambda^2,\nonumber\\
    &\Lambda(4-\Lambda)<3a(4-a)+4c+4d-16.\label{Lambdas smaller than 4}
\end{align}

We note that if $a=\Lambda$, then \eqref{cycle: Lambda_cubic} becomes

\begin{equation}\label{case a=lambda}
\Lambda^3-2\Lambda^2-4c\Lambda+8c=0\qquad \text{so}\qquad(\Lambda-2)(\Lambda-2\sqrt{c})(\Lambda+2\sqrt{c})=0
\end{equation}
as we require $c\geq 0$ by \eqref{cycle: constraints}. As a result, since $\Lambda\geq 0$, we obtain that $a=\Lambda=2$ or $a=\Lambda=2\sqrt c$. If $a=\Lambda=2$, then $b=2$ so $c+d\leq 2$ by \eqref{cycle: constraints} which contradicts \eqref{Lambdas smaller than 4} which says $c+d>2$. If $a=\Lambda=2\sqrt c$, then $b=4-2\sqrt c$ so $4d\leq (4-2\sqrt c)^2$ which contradicts \eqref{Lambdas smaller than 4} which says $4d>(4-2\sqrt c)^2$.

Now we consider the case $a\neq \Lambda$. Then, using \eqref{cycle: Lambda_cubic}, we have
\begin{align}\label{cycle: d expression}
    d = \frac{1}{4(a-\Lambda)}\Biggl(\Lambda^3-8\Lambda^2+(3a(4-a)+4c)\Lambda-4c(4-a)\Biggr).
\end{align}

Thus, the condition $d\leq \frac{(4-a)^2}{4}$ in \eqref{cycle: constraints} can be rewritten as 
\begin{align}
    \frac{1}{4(a-\Lambda)}[\Lambda^3-8\Lambda^2+(3a(4-a)+4c)\Lambda-4c(4-a)]\leq \frac{(4-a)^2}{4}.\label{cycle: d-Lambda-inequality}
\end{align}

First, using \eqref{cycle: d expression}, we obtain that \eqref{Lambdas smaller than 4} is equivalent to two cases:
\begin{align}
    &\Lambda>a\text{ and }8c(a-2)+(a-4)((\Lambda-4)\Lambda-3a^2)-16a < 0\text{, or}\label{equ 1 case 1}\\
    &\Lambda<a\text{ and }8c(a-2)+(a-4)((\Lambda-4)\Lambda-3a^2)-16a > 0.\label{equ 1 case 2}
\end{align}
 
On the other hand, the constraint \eqref{cycle: d-Lambda-inequality} can be split into two cases:
 \begin{align}
     &\Lambda>a \text{ and } (\Lambda+a-4)(a^2+(\Lambda-4)a-\Lambda(\Lambda-4)-4c) \leq 0\text{, or}\label{equ 2 case 1}\\
     &\Lambda<a \text{ and } (\Lambda+a-4)(a^2+(\Lambda-4)a-\Lambda(\Lambda-4)-4c) \geq 0.\label{equ 2 case 2}
\end{align}

Our goal is to show that it is impossible to have \eqref{Lambdas smaller than 4}, \eqref{cycle: constraints}, and $\Lambda<4$ simultaneously. Using the cases outlined above, it suffices to show that when $\Lambda>a$, we cannot have \eqref{equ 1 case 1} and \eqref{equ 2 case 1} hold simultaneously, and similarly when $\Lambda<a$ we cannot have \eqref{equ 1 case 2} and \eqref{equ 2 case 2} hold simultaneously. We break down further subcases based on $(\Lambda+a-4)$.

\textbf{{Case 0: $\Lambda+a-4=0$.}} Then, we have

\begin{equation}
8c(a-2)+(a-4)((\Lambda-4)\Lambda-3a^2)-16a=2(4c-a^2)(a-2)=\begin{cases}
    \geq 0 &\text{if} \quad \Lambda>a, \\
    \leq 0  &\text{if} \quad \Lambda<a
\end{cases}
\end{equation}

as $4c-a^2\leq 0$. Thus, we get a contradiction to \eqref{equ 1 case 1} and \eqref{equ 1 case 2}.

\textbf{{Case 1a: $\Lambda>a$ and $\Lambda+a-4>0$.}} Since $a^2\geq 4c$ and $\Lambda<4$,  
\begin{align*}
    a^2+(\Lambda-4)a-\Lambda(\Lambda-4)-4c&\geq(\Lambda-4)a-\Lambda(\Lambda-4)= (a-\Lambda)(\Lambda-4)>0
\end{align*}
which contradicts $\eqref{equ 2 case 1}$.

\textbf{{Case 1b: $\Lambda>a$ and $\Lambda+a-4<0$.}} By the conditions, we have $a-2<0$. In particular, using $4c\leq a^2$, we have $8c(a-2)\geq 2a^2(a-2)$. Therefore, 
\begin{align*}
    8c(a-2)+(a-4)((\Lambda-4)\Lambda&-3a^2)-16a \\&\geq 2a^2(a-2)+(a-4)((\Lambda-4)\Lambda-3a^2)-16a\\
    & = -a^3+8a^2-16a+\Lambda^2 a-4\Lambda^2-4\Lambda a+16\Lambda\\
    & = (a-4)(\Lambda-a)(\Lambda+a-4)>0
\end{align*}
as $a<2$, $\Lambda>a$, and $\Lambda+a-4>0$. Thus, we got a contradiction to \eqref{equ 1 case 1}. 

\textbf{{Case 2a: $\Lambda<a$ and $\Lambda+a-4>0$.}} By the conditions, we have $a-2>0$.

Moreover, \eqref{equ 2 case 2} implies
\begin{align*}
    4c < a^2 + (\Lambda-4)(a-\Lambda) = a^2 + (\Lambda-4)a-\Lambda(\Lambda-4).
\end{align*}

Therefore,
\begin{align*}
    8c(a-2)+&(a-4)((\Lambda-4)\Lambda-3a^2)-16a\\ <& \,2(a^2 + (\Lambda-4)a-\Lambda(\Lambda-4))(a-2)+ (a-4)((\Lambda-4)\Lambda-3a^2)-16a\nonumber\\
    = &\,a(2a^2+2a(\Lambda-4)-2\Lambda(\Lambda-4)+\Lambda(\Lambda-4)-3a^2-16)\\&-4(a^2+a(\Lambda-4)-\Lambda(\Lambda-4)+\Lambda(\Lambda-4)-3a^2)\nonumber\\
    =& \,a(-a^2+2a\Lambda-\Lambda^2)=\,-a(a-\Lambda)^2<\,0
\end{align*}
Thus, starting from the assumption that \eqref{equ 2 case 2} holds, we've shown that \eqref{equ 1 case 2} fails, so the two cannot hold simultaneously in this case.

\textbf{{Case 2b: $\Lambda<a$ and $\Lambda+a-4<0$.}} In particular, since $\Lambda,a\geq 0$, we have $a< 4$. We consider two subcases: when $4>a\geq 2$ and when $0\leq a<2$.

First, we show by contradiction that \eqref{equ 1 case 2} does not hold if $4>a\geq 2$.
Assume $$8c(a-2)>16a-(a-4)(\Lambda(\Lambda-4)-3a^2).$$
Since $4c\leq a^2$ and $a\geq 2$, we also have $a^2(a-2)\geq 8c(a-2)$. Therefore, 
\begin{equation*}
    2a^2(a-2) > 16a-(a-4)(\Lambda(\Lambda-4)-3a^2),
\end{equation*}
which is equivalent to 
\begin{align*}
    &-a^3+ 8a^2+a(\Lambda(\Lambda-4)-16)-4\Lambda(\Lambda-4)>0\\
    &(a-4)(-a(a-4)+\Lambda(\Lambda-4))>0\\
    &(a-4)(\Lambda-a)(\Lambda+a-4)>0.
\end{align*}
However, $(a-4)(\Lambda-a)(\Lambda+a-4)<0$ as $\Lambda<a$, $\Lambda+a-4$, and $a<4$. As a result, we obtained a contradiction so \eqref{equ 1 case 2} does not hold in this subcase.

Now we show by contradiction that we cannot have both \eqref{equ 1 case 2} and \eqref{equ 2 case 2} hold simultaneously if $0\leq a<2$. Assume \eqref{equ 1 case 2} and \eqref{equ 2 case 2} hold. Then, using \eqref{equ 1 case 2} and $a<2$, we have 
$$8c(a-2)<2(a-2)(a^2+(\Lambda-4)a-\Lambda(\Lambda-4)).$$ Combining the previous inequality with \eqref{equ 2 case 2}, we should have
\begin{equation*}
2(a-2)(a^2+(\Lambda-4)a-\Lambda(\Lambda-4)) > 16a-(a-4)(\Lambda(\Lambda-4)-3a^2)
\end{equation*}
which is equivalent to

\begin{equation*}
a(-a^2+2\Lambda a-\Lambda^2)>0, \qquad\text{i.e.}\quad -a(a-\Lambda)^2>0.
\end{equation*}
However, the above inequality cannot hold as $a\geq 0$ and $(a-\Lambda)^2>0$ so we get a contradiction. Thus, \eqref{equ 1 case 2} and \eqref{equ 2 case 2} do not hold simultaneously if $0\leq a<2$.

As a result, combining all the cases, we obtain that a triple $(\Lambda, \Lambda_2, \Lambda_3)$ with all values less than 4 is unattainable by the cycle graph. 
\end{proof}

 \begin{lemma}\label{lemma: cycle one larger than 4}
     If $\Lambda_1,\Lambda_2,\Lambda_3\geq 0$, $\Lambda_1+\Lambda_2+\Lambda_3=8$, and $\Lambda_i\geq 4$ for some $i\in\{1,2,3\}$, then the triple $(\Lambda_1,\Lambda_2,\Lambda_3)$ is attainable.
 \end{lemma}
 \begin{proof}
Consider a given triple $(\Lambda_1, \Lambda_2, \Lambda_3)$ of non-negative numbers summing to 8 and with one of the three eigenvalues greater than or equal to 4. Without loss of generality, we can assume that $\Lambda_1=4+s\geq 4$, i.e., $0\leq s\leq 4$. Denote $\Lambda_2=t$, then $\Lambda_3=4-s-t$ where $0\leq t$ and $s+t\leq 4$. 

We want to show that there exist $a, c,$ and $d$ such that
\begin{align}
3a(4-a)+4(c+d)&=16-s^2+t(4-s)-t^2,\nonumber\\
4(ad+(4-a)c)&=(16-s^2)t-(4+s)t^2,\nonumber\\
0\leq &a\leq4,\label{cycle: inverse equation}\\ 0\leq &c\leq\frac{a^2}{4},\nonumber\\ 0\leq &d\leq\frac{(4-a)^2}{4}.\nonumber
\end{align}

We note that we have $a=2$ is a solution only if $2(16-s^2+t(4-s)-t^2-12)=(16-s^2)t-(4+s)t^2$. The previous equation is equivalent to $(s+2)t^2+(s^2-2s-8)t+(8-2s^2)=0$ which is the quadratic equation in $t$ with the discriminant $(s+2)^2s^2\geq 0$ . Thus, $a=2$ is only possible for considered $s,t$ when $t=2$ or $t=2-s$ what corresponds to triples of eigenvalues $(4+s,2,2-s)$ and $(4+s,2-s,2)$ with $0\leq s\leq 2$. We note that those triples of eigenvalues are attainable as we can choose any $c,d$ such that $0\leq c,d\leq 1$. 

Assume $(\Lambda_1,\Lambda_2,\Lambda_3)=(4+s,t,4-s-t)$ is not of the form $(4+s,2,2-s)$ and $(4+s,2-s,2)$ with $0\leq s\leq 2$. Then, if a solution of \eqref{cycle: inverse equation} exists then $a\neq 2$. Then, solving the system of the first two equations in \eqref{cycle: inverse equation}, we obtain that

\begin{align*}
4c&=\frac{a(16-s^2+t(4-s)-t^2-3a(4-a))-((16-s^2)t-(4+s)t^2)}{2(a-2)}\\
&=\frac{3a^3-12a^2+a(16-s^2+4t-st-t^2)+(s^2t+st^2+4t^2-16t)}{2(a-2)} \qquad \text{and}\qquad \\
4d&=\frac{(a-4)(16-s^2+t(4-s)-t^2-3a(4-a))+(16-s^2)t-(4+s)t^2}{2(a-2)}\\
&=\frac{3a^3-24a^2+a(64-s^2+4t-st-t^2)+(4s^2+4st-s^2t-st^2-64)}{2(a-2)}.
\end{align*}

Thus, we need to show for all $0\leq s\leq 4$, $t\geq0$, $t\neq 2$ and $t\neq 2-s$ such that $t+s\leq 4$, there exists $a\in[0,4]$ such that $0\leq 4c\leq a^2$ and $0\leq 4d\leq (4-a)^2$.

First, we show that for $2t+s\leq 4$, we can take $a=4-t\in[0,4]$. We have $4d=t^2$ and 
\begin{align*}
4c&=\frac{2s^2 t-4s^2+2st^2 -4st-2t^3+20t^2-64t+64}{2(2-t)}\\&=-(2t+s-4)^2-(8-3t)(2t+s-4)+t(4-t)\in[0,(4-t)^2]=[0,a^2].
\end{align*}

Since $(4-a)^2=t^2$, we have $(4d)\in[0,(4-a)^2]$. We show that $4c\in[0,a^2]=[0,(4-t)^2]$. We note that, by the conditions on $s$ and $t$, we have $2t-4\leq(2t+s-4)\leq 0$ and $0\leq t\leq 2$. In particular, $-\frac{8-3t}{2}\leq 2t-4$ so a function $-u^2-(8-3t)u+t(4-t)$ for $2t-4\leq u\leq 0$ and $0\leq t\leq 2$ has a minimum at $u=0$ which is $t(4-t)\geq 0$ and a maximum at $u=(2t-4)$ which is $(4-t)^2$.

Now, we show that for $2t+s>4$, we can take $a=s+t\in[0,4]$. We have
\begin{align*}
4d&=\frac{2(s+t-2)(s+t-4)^2}{2(s+t-2)}=(s+t-4)^2\in[0,(4-a)^2]\qquad \text{and}\\
4c&=\frac{2(s+t-2)(s^2+3st-4s+t^2)}{2(s+t-2)}=s^2+3st-4s+t^2\\
&=(2t+s-4)^2+(4-t)(2t+s-4)+4t-t^2\in[0,(s+t)^2]=[0,a^2].
\end{align*}
Since $(4-a)^2=(s+t-4)^2$, $4d\in[0,(4-a)^2]$. We show that $4c\in[0,a^2]=[0,(s+t)^2]$. We note that, by the conditions on $s$ and $t$, we have $\max\{0,2t-4\}\leq 2t+s-4\leq t$ and $0\leq t\leq 4$. Thus, we have $4c\geq 0$ and 
$$4c-(s+t)^2=(4-t)\Big((2t-4)-(2t+s-4)\Big)\leq 0.$$
\end{proof}

Combining Lemmas ~\ref{lemma: cycle all smaller than 4} and ~\ref{lemma: cycle one larger than 4} and rewriting the result for any normalization of the sum of eigenvalues, we obtain Theorem~\ref{theorem_cycle}.
\end{proof}

\section{Path}\label{sec: path graph}

Consider the path graph on four vertices, i.e., let $v_1,v_2,v_3,v_4$ be the vertices, then the set of edges consists of $v_iv_{i+1}$, $i=1,2,3$. To describe the realizable eigenvalues on the path graph, we treat it as a cycle graph with one edge having weight zero.

\begin{theorem}\label{theorem_path}
   The triple of eigenvalues obtained by the path graph on four vertices are precisely $(x,y,z)$ such that $x,y,z\geq 0$, $\max\{x,y,z\}\geq\frac{1}{2}(x+y+z)$, and 
\[\Big(9(x^3+y^3+z^3)-13U+62xyz\Big)^2\leq (3S-2P)^2(9S-14P),\]

where $S = x^2+y^2+z^2, \qquad P =xy+xz+yz,\qquad U = x^2(y+z) + y^2(x+z)+z^2(x+y)$.
\end{theorem}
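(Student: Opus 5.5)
We view the path graph as the cycle graph of Section~\ref{sec: cycle graph} with $l_4=0$, and we reuse the parametrization \eqref{cycle: parameters vs lengths}. Normalize $\lambda_1+\lambda_2+\lambda_3=8$, so $l_1+l_2+l_3=4$. With $l_4=0$ we have $d=0$ and $b=l_2=4-a$. The system \eqref{cycle: eigenvalues vs parameters} then becomes
\[
e_2:=\lambda_1\lambda_2+\lambda_1\lambda_3+\lambda_2\lambda_3=3a(4-a)+4c,\qquad e_3:=\lambda_1\lambda_2\lambda_3=4(4-a)c .
\]
The admissibility conditions \eqref{cycle: constraints} reduce to $0\le a\le 4$ and $0\le 4c\le a^2$.

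\textbf{Reduction to one variable.} Put $u=4-a=l_2$ and eliminate $c=e_3/(4u)$ (the degenerate case $u=0$ forces $e_3=0$ and is handled separately). We obtain the cubic
\[
3u^3-12u^2+e_2u-e_3=0 .
\]
So a normalized triple is realizable exactly when this cubic has a real root $u\in(0,4]$ with $e_3\le u(4-u)^2$. Substituting $e_3=3u^3-12u^2+e_2u$ from the cubic, the last inequality becomes $u(16+4u-2u^2-e_2)\ge 0$, i.e. $2u^2-4u\le 16-e_2$. This already forces $u\le 1+\sqrt{9-e_2/2}\le 4$. For $e_2,e_3>0$, Descartes' rule of signs shows that all real roots are positive, and $c\ge0$ is then automatic.

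\textbf{The two conditions.} First, the path is a special case of the cycle. By Theorem~\ref{theorem_cycle}, the condition $\max\{x,y,z\}\ge\frac12(x+y+z)$ (here $\max\ge 4$) is therefore necessary. Second, existence of any real root requires the discriminant of the cubic in $u$ to be nonnegative. Computing $18ABCD-4B^3D+B^2C^2-4AC^3-27A^2D^2$ with $A=3$, $B=-12$, $C=e_2$, $D=-e_3$, expressing the result in $x,y,z$, and homogenizing via $8=x+y+z$ should produce exactly the stated inequality in $S,P,U$. I would verify this by direct symbolic expansion. Conversely, if the discriminant is nonnegative, the cubic has three positive real roots (counted with multiplicity), and we must show that one of them satisfies $2u^2-4u\le 16-e_2$.

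\textbf{Main obstacle: the converse.} The hard step is showing that $\max\{\lambda_i\}\ge 4$ guarantees that some real root lies in the interval $I=\bigl[0,\,1+\sqrt{9-e_2/2}\,\bigr]$. My plan is to evaluate the cubic $f(u)=3u^3-12u^2+e_2u-e_3$ at the right endpoint $u^*=1+\sqrt{9-e_2/2}$. Since $f(0)=-e_3\le0$, it suffices to show $f(u^*)\ge0$ under $\lambda_1\ge4$. On the curve $2u^2-4u=16-e_2$ we have $f(u)=u(4-u)^2-e_3$, so the claim is $u^*(4-u^*)^2\ge e_3$.

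Parametrize as in Lemma~\ref{lemma: cycle one larger than 4}: $\lambda=(4+s,t,4-s-t)$. Then $16-e_2=s^2+t^2+st-4t+\dots$, and the target inequality becomes a polynomial inequality in $(s,t)$ on the triangle $0\le s$, $t\ge0$, $s+t\le4$. I would attempt a factorization or sum-of-squares argument, possibly splitting into the two regions $2t+s\le4$ and $2t+s>4$ as in Lemma~\ref{lemma: cycle one larger than 4}. If this endpoint argument fails on part of the region, the fallback is to show instead that the smallest real root $u_1$ lies in $I$. For this, compare the sign of $f$ at $u^*$ with the location of the critical points $u=2\pm\sqrt{4-e_2/9}$ of $f$.
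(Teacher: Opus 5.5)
Your reduction is correct and matches the paper's. Your $u=l_2$ is the middle weight (the paper's $a$), your $f(u)=3u^3-12u^2+e_2u-e_3$ is the paper's $F/4$ at $f=4$, and $2u^2-4u\le 16-e_2$ is its condition on the outer weights.

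The proof breaks at the discriminant step. A real cubic always has a real root; $\Delta\ge 0$ says that all three roots are real. That was the right test for the star graph, where the three roots are the three weights. Here only one root, the middle weight, is needed, since the outer weights are recovered from $a=4-u$ and $c=e_3/(4u)$. So $\Delta\ge0$ is not necessary, and it cannot expand to the stated inequality. Concretely, the weights $(l_1,l_2,l_3)=(1,1,2)$ give $e_2=17$, $e_3=8$ and $f(u)=(u-1)(3u^2-9u+8)$, with $\Delta=-60<0$. Yet this triple is realized (with $4c=8<9=a^2$) and satisfies the theorem's inequality strictly: $192^2<56^2\cdot 32$. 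In fact $\Delta\ge0$ forces the smallest root into the admissible window, so your two conditions describe only a proper subregion.

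A second problem is the window itself. The admissible set is $[u_-,u_+]$ with $u_\pm=1\pm\sqrt{9-e_2/2}$. It is empty for $e_2>18$ (this is the factor $9S-14P\ge 0$), and $u_->0$ as soon as $e_2>16$. So a root in your $I=[0,u_+]$ is not enough. For $(4,\tfrac12,\tfrac72)$ one has $e_2=\tfrac{71}{4}$, $f$ is increasing, and $f(0)<0<f(u_-)$. Thus $f$ has a root in $I$ but none in $[u_-,u_+]$, and indeed the theorem's inequality fails there: $176^2>50^2\cdot 8$.

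Your planned lemma also cannot work: $f(u_+)\ge0$ does not follow from $\lambda_1\ge 4$ (it fails at $(6,1,1)$). It is part of the answer, not a consequence of the max condition.

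The missing idea is the paper's location criterion: an admissible root exists if and only if $f(u_-)\le 0\le f(u_+)$.
\begin{itemize}
\item If $e_2>16$, then $f'(u)=9u^2-24u+e_2$ has no real zeros, $f$ is increasing, and the criterion is immediate.
\item If $e_2\le 16$, then $u_-\le 0$, so $f(u_-)\le f(0)\le 0$ holds automatically. One must then show that the local maximum $f\bigl(\tfrac{4-\sqrt{16-e_2}}{3}\bigr)$ does not exceed $f(u_+)$. Note that the critical points are $\tfrac{4\pm\sqrt{16-e_2}}{3}$, not $2\pm\sqrt{4-e_2/9}$.
\end{itemize}
One has $f(u_\pm)=-\tfrac{1}{32}\bigl(G\mp H\sqrt K\bigr)$, where $4G=9(x^3+y^3+z^3)-13U+62xyz$, $2H=3S-2P\ge 0$ and $4K=9S-14P$. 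So the criterion is $|G|\le H\sqrt K$, which squares to the stated inequality. The stated polynomial is therefore, up to a positive constant, $-f(u_+)f(u_-)$: a resultant of $f$ with the window quadratic, not the discriminant of $f$.
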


\begin{proof} 
We view the path graph as a cycle with the edge $v_4v_1$ having weight $0$. In the notations of the proof of Theorem~\ref{theorem_cycle}, we have $l_1=0$ (the weight of the edge $v_4v_1$), and $l_i$ is the weight of the edge $v_{i-1}v_i$ where $i=2,3,4$. If the sum of eigenvalues is zero, then necessarily $x=y=z=0$ and the theorem holds, so we set aside this case. We adopt the normalization that the sum of eigenvalues is $2f>0$. By Lemma~\ref{lemma: cycle all smaller than 4} and the normalization that the sum of eigenvalues is $2f$, we know that at least one eigenvalue should be at least $f$. Consider a given triple $(\Lambda_1,\Lambda_2,\Lambda_3)$ of non-negative numbers summing to $2f$. Without loss of generality, we assume $\Lambda_1=f+s\geq f$, i.e., $0\leq s\leq f$. Denote $\Lambda_2=\frac{f-s-t}{2}$ and $\Lambda_3=\frac{f-s+t}{2}$ where $|t|\leq f-s$. To describe which triples $(\Lambda_1,\Lambda_2,\Lambda_3)$ are realizable by a path graph, we need to describe pairs $(s,t)$ with $0\leq s\leq f$ and $|t|\leq f-s$ so that there exist $a$ and $d$ such that
\begin{align}
3a(f-a)+4d&=(f+s)(f-s)+\frac{(f-s)^2-t^2}{4}\nonumber\\
4ad&=(f+s)\cdot\frac{(f-s)^2-t^2}{4}\label{path: eigenvalues inverse}\\
0\leq &a\leq f\nonumber\\
0\leq &d\leq\frac{(f-a)^2}{4}.\nonumber
\end{align}

For the relation between the eigenvalues, the weights of the graph edges, and the parameters $a,d$, see \eqref{cycle: parameters vs lengths}, \eqref{cycle: eigenvalues vs parameters}, and \eqref{cycle: inverse equation}.

From the first two equations in \eqref{path: eigenvalues inverse}, we obtain that $a$ must satisfy the following equation.

\begin{equation}\label{path: equation on a}
12a^3-12fa^2+\Big(4(f^2-s^2)+(f-s)^2-t^2\Big)a-(f+s)\Big((f-s)^2-t^2\Big)=0
\end{equation}
In particular, we note that $a=0$ if and only if $t=\pm(f-s)$, i.e., $(\Lambda_1,\Lambda_2,\Lambda_3)=(f+s,0,f-s)$ or $(f+s,f-s,0)$. Thus, eigenvalues $(f+s,0,f-s)$ and $(f+s,f-s,0)$ where $0\leq s\leq f$ are realizable if $a=0$ and $d=\frac{f^2-s^2}{4}$. Moreover, if $a\neq 0$, then from the second equation, conditions on $s,t$, and $a$, we have that $d$ is automatically non-negative.

Assume $(\Lambda_1,\Lambda_2,\Lambda_3)\neq(f+s,0,f-s)$ or $(f+s,f-s,0)$ where $0\leq s\leq f$. Then, $a\neq 0$ and it is enough to describe $(s,t)$ with $0\leq s\leq f$ and $|t|< f-s$ so that there exists $a\in(0,f]$ such that \eqref{path: equation on a} holds and
\begin{equation}\label{path: inequality on d in a terms}
f^2-s^2+\frac{(f-s)^2-t^2}{4}-3a(f-a)\leq (f-a)^2.
\end{equation}
First, we analyze the inequality \eqref{path: inequality on d in a terms}. It can be rewritten as
\[
a^2-\frac{f}{2}a+\frac{-3s^2-2sf+f^2-t^2}{8}\leq 0
\]
that has a solution if an only if
\begin{equation}\label{path: 1st cond on s,t}
    3\Big(s+\frac{f}{3}\Big)^2+t^2\geq\frac{5}{6}f^2
\end{equation}
and the solution is
\begin{equation}\label{path: cond on a from d condition}
   \frac{f}{4}-\frac{1}{4}\sqrt{6s^2+4sf+2t^2-f^2}\leq a\leq \frac{f}{4}+ \frac{1}{4}\sqrt{6s^2+4sf+2t^2-f^2}
\end{equation}

We note that $\frac{f}{4}+\frac{1}{4}\sqrt{6s^2+4sf+2t^2-f^2}\leq f$ as 
\begin{align*}
6s^2+4sf+2t^2-f^2\leq 6s^2+4sf+2(f-s)^2-f^2=8s^2+f^2\leq 8\cdot f^2+f^2=9f^2 
\end{align*}
and so \begin{align}\label{path: some inequality}
    \frac{f}{4}+\frac{1}{4}\sqrt{6s^2+4sf+2t^2-f^2}\leq \frac{f}{4}+\frac{1}{4}\sqrt{9f^2}=f
\end{align}

Thus, we are looking for an existence of a solution $a>0$ of \eqref{path: equation on a} such that \eqref{path: cond on a from d condition} holds.

Denote
\[
F(a)=12a^3-12fa^2+\Big(4(f^2-s^2)+(f-s)^2-t^2\Big)a-(f+s)\Big((f-s)^2-t^2\Big).
\]
Then,
\begin{equation}\label{path: monotonicity regions w.r.t. a}
F'(a)=36 a^2 - 24 a f + 5f^2-2sf-3s^2 - t^2
\end{equation}
Moreover, if we try to solve $F'(a)=0$ with respect to $a$, we get the discriminant 
\begin{equation}\label{path: discriminant expression}
D=144\Bigg(3\Big(s+\frac{f}{3}\Big)^2+t^2-\frac{4}{3}f^2\Bigg).
\end{equation}

\textbf{{Case 1:  $\frac{f}{4}-\frac{1}{4}\sqrt{6s^2+4sf+2t^2-f^2}\leq 0$}}, i.e., $3\Big(s+\frac{f}{3}\Big)^2+t^2\geq \frac{4}{3}f^2$. In particular, $D\geq 0$. Denote $L=\sqrt{3s^2+2fs+t^2-f^2}$. Then, $D=144L^2$ and $\frac{f}{4}\pm\frac{1}{4}\sqrt{6s^2+4sf+2t^2-f^2}= \frac{f\pm\sqrt{2L^2+f^2}}{4}$. We have $F'(a)=0$ if and only if $a=\frac{2f\pm L}{6}$. Also, $F'(a)<0$ for $a\in\Big(\frac{2f-L}{6},\frac{2f+L}{6}\Big)$ so $F$ is decreasing for those $a$, and $F'(a)>0$ for $a\in\Big(-\infty,\frac{2f-L}{6}\Big)\cup\Big(\frac{2f+L}{6},+\infty\Big)$. By \eqref{path: some inequality}, we know that $L\leq 2f$ so $\frac{2f-L}{6}\geq 0$. Moreover, $F(0)=-(f+s)\Big((f-s)^2-t^2\Big)<0$ as $0\leq s\leq f$ and $|t|<f-s$ so $F\Big(\frac{f-\sqrt{2L^2+f^2}}{4}\Big)<0$ as well. Also, $\frac{f+\sqrt{2L^2+f^2}}{4}\geq \frac{2f+L}{6}$ as $L,f\geq 0$. 
Thus, there exists $a>0$ such that \eqref{path: cond on a from d condition} holds and $F(a)=0$ if and only if 
\[
F\Big(\frac{f+\sqrt{2L^2+f^2}}{4}\Big)\geq 0\qquad \text{or} \qquad F\Big(\frac{2f-L}{6}\Big)\geq 0.
\]
We note that $4(f^2-s^2)+(f-s)^2-t^2=4f^2-L^2$. Then,
\[
F\Big(\frac{f+\sqrt{2L^2+f^2}}{4}\Big)-F\Big(\frac{2f-L}{6}\Big)=-\frac{1}{72}(14f^3+21fL^2+8L^3)+\frac{1}{8}(2f^2+L^2)\sqrt{2L^2+f^2}.
\]
Moreover,
\[
9^2(2f^2+L^2)^2(2L^2+f^2)-(14f^3+21fL^2+8L^3)^2=2(2f-L)^2(7L^2+2Lf+4f^2)^2\geq 0.
\]
Thus, $F\Big(\frac{f+\sqrt{2L^2+f^2}}{4}\Big)-F\Big(\frac{2f-L}{6}\Big)\geq 0$ so there exists $a>0$ such that \eqref{path: cond on a from d condition} holds and $F(a)=0$ if and only if $F\Big(\frac{f+\sqrt{2L^2+f^2}}{4}\Big)\geq 0$.

\textbf{{Case 2: $\frac{f}{4}- \frac{1}{4}\sqrt{6s^2+4sf+2t^2-f^2}>0$}}, i.e., $\frac{5}{6}f^2\leq3\Big(s+\frac{f}{3}\Big)^2+t^2< \frac{4}{3}f^2$ as \eqref{path: 1st cond on s,t} should hold. In particular, $D<0$ so there are no critical points. Also, $F'(0)>0$, so the function $F$ is increasing. Thus, there exists a solution $a>0$ of \eqref{path: equation on a} such that \eqref{path: cond on a from d condition} holds if and only if $F\Big(\frac{f}{4}- \frac{1}{4}\sqrt{6s^2+4sf+2t^2-f^2}\Big)\leq 0$ and $F\Big(\frac{f}{4}+ \frac{1}{4}\sqrt{6s^2+4sf+2t^2-f^2}\Big)\geq 0$.

We note that since $F\Big(\frac{f}{4}- \frac{1}{4}\sqrt{6s^2+4sf+2t^2-f^2}\Big)\leq 0$ in Case 1, we can combine Cases 1 and 2. Now, we plug in our values of $a$ into $F(a)$ to obtain
\begin{align*}
    &F\Bigg(\frac{f}{4}\pm \frac{1}{4}\sqrt{6s^2+4sf+2t^2-f^2}\Bigg) \\&= -\frac{1}{8}(f^3+2f^2s+7fs^2-3ft^2+8s^3-8st^2\mp(f^2+2fs+3s^2+t^2)\sqrt{6s^2+4fs+2t^2-f^2}).
\end{align*}
As a result, changing variables to $x = \Lambda_1=f+s, y=\Lambda_2 = \frac{1}{2}(f-s-t), z=\Lambda_3 = \frac{1}{2}(f-s+t)$ (note $x,y,z\geq 0$) and introducing notation \begin{align*}
    S = x^2+y^2+z^2 \qquad P =xy+xz+yz\qquad U = x^2(y+z) + y^2(x+z)+z^2(x+y),
\end{align*}
we obtain that there exists a solution $a>0$ of \eqref{path: equation on a} such that \eqref{path: cond on a from d condition} holds if and only if
\[|9(x^3+y^3+z^3)-13U+62xyz|\leq (3S-2P)\sqrt{9S-14P},\]
which can be equivalently rewritten as
\begin{align}
    \Big(9(x^3+y^3+z^3)-13U+62xyz\Big)^2\leq (3S-2P)^2(9S-14P)\label{path - final result}
\end{align}
because $3S-2P=\frac{3}{2}\Bigg((x-y)^2+(x-z)^2+(y-z)^2\Bigg)+P\geq 0$ for $x,y,z\geq 0$.

Finally, note that if \eqref{path - final result} holds, the right-hand side must necessarily be non-negative, meaning $9S-14P\geq 0$. Translating back into $f, s, t$, this exactly gives \eqref{path: 1st cond on s,t}, so that condition has been satisfied as well. 
\end{proof}

\section{Kite}\label{sec: kite graph}

Consider a graph on four vertices $v_1,v_2,v_3,v_4$ whose set of edges consists of $v_1v_2$, $v_2v_3$, $v_1v_3$, and $v_3v_4$. We will call it a kite graph (in \cite{FGL}, it is called a paw graph). Similarly, with the previous graphs we choose a convenient parametrization.

\begin{theorem}\label{theorem_kite}
     The triple of eigenvalues obtained by the kite graph on four vertices are precisely $(x,y,z)$ such that $x,y,z\geq0$ and at least one of the following inequalities holds:
    \begin{itemize}
        \item $\sqrt{3}\vert x-y\vert\geq x+y$,
        \item $\sqrt{3}\vert x-z\vert\geq x+z$,
        \item $\sqrt{3}\vert y-z\vert\geq y+z$.
    \end{itemize}
\end{theorem}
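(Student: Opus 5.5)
The plan is to proceed as for the cycle and path graphs: parametrize the weights by symmetric functions, write the realizable triples as the image of an explicit map, and identify its boundary. Label the weights $l_1$ on $v_1v_2$, $l_2$ on $v_2v_3$, $l_3$ on $v_1v_3$ and $l_4$ on $v_3v_4$. In the characteristic polynomial, the coefficient of $\lambda^{2}$ counts pairs of edges with multiplicity $3$ if they share a vertex and $4$ if they are disjoint. The constant term (after dividing by $\lambda$) counts spanning trees with multiplicity $4$. This gives
\begin{align*}
\lambda_1+\lambda_2+\lambda_3&=2(l_1+l_2+l_3+l_4),\\
\lambda_1\lambda_2+\lambda_1\lambda_3+\lambda_2\lambda_3&=3\big(l_1(l_2+l_3)+l_2l_3+l_4(l_2+l_3)\big)+4l_1l_4,\\
\lambda_1\lambda_2\lambda_3&=4l_4\big(l_1(l_2+l_3)+l_2l_3\big).
\end{align*}
Since $l_2,l_3$ enter symmetrically, I would set $a=l_2+l_3$ and $c=l_2l_3$, with constraints $a,c,l_1,l_4\ge 0$ and $4c\le a^2$. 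I would normalize $\lambda_1+\lambda_2+\lambda_3=8$, so $l_1+a+l_4=4$. The key feature is that $c$ enters both remaining equations linearly. So for fixed $(l_1,l_4,a)$, the triples obtained as $c$ ranges over $[0,a^2/4]$ trace a segment of the cubic's coefficient data joining two extreme configurations. At $c=0$ one of $l_2,l_3$ vanishes and the kite degenerates to a path graph, which is handled by Theorem~\ref{theorem_path}. At $c=a^2/4$ we have $l_2=l_3$, and the swap $v_1\leftrightarrow v_2$ is a symmetry.

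Step one is to analyze the symmetric case $l_2=l_3=a/2$. The vector $e_1-e_2$ is then an eigenvector with eigenvalue $2l_1+a/2$. The other two nonzero eigenvalues come from the quotient weighted graph on $\{v_1,v_2\}$, $v_3$, $v_4$, and I would solve for them explicitly by a quadratic formula. From this I expect to read off that the symmetric kite realizes exactly the triples where two of the eigenvalues, say $x,y$, satisfy $\sqrt3|x-y|\ge x+y$, i.e.\ $x/y\notin\big(2-\sqrt3,2+\sqrt3\big)$. The constant $\sqrt3$ should emerge as the extremal ratio, i.e.\ the tangency of the relevant quadratic's discriminant. This gives sufficiency on the three cones directly, with explicit weights.

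Step two is necessity: a triple with all three pairwise ratios in $(2-\sqrt3,2+\sqrt3)$, a neighbourhood of the center of the normalized triangle, is not attainable. Here I would mimic Lemma~\ref{lemma: cycle all smaller than 4}. Fix one eigenvalue $\Lambda$, use the cubic to solve for $c$ (linear in $c$) in terms of $\Lambda,l_1,l_4,a$, and impose $0\le 4c\le a^2$. Then show, by a case split on the signs of the linear coefficients, that the conditions forcing the other two eigenvalues into the forbidden cone contradict these inequalities. As a consistency check, I would verify that the path region of Theorem~\ref{theorem_path} lies inside the union of the three cones, so the $c=0$ endpoint never produces forbidden triples.

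I expect step two to be the main obstacle. The forbidden region is cut out by three pairwise conditions rather than a single inequality, so the case analysis is heavier than for the cycle. If a direct attack stalls, I would first try a continuity and boundary argument instead. The realizable set is the image of a compact connected parameter set, so it suffices to show that no triple on the boundary lines $\sqrt3|x-y|=x+y$, approached from inside the central region, is realized by a nonsymmetric kite ($c<a^2/4$). Then I would check that the central point $(8/3,8/3,8/3)$ is not attained, which holds because the kite Laplacian cannot have a triple eigenvalue.
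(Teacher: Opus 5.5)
Your sufficiency step matches the paper's Lemma~\ref{kite: lemma which are realizable}. That lemma takes $c=4b-bd-\frac34b^2$, i.e.\ $l_2=l_3=u$. The quotient pair then has sum $s=3u+2l_4$ and product $p=4ul_4$, with $s^2-6p=(3u-2l_4)^2$.

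One correction: the symmetric kite does not realize ``exactly'' the triples having some pair in a cone, because its third eigenvalue is $2l_1+u\ge u$. You must:
\begin{itemize}
\item put the extreme pair $(\Lambda_1,\Lambda_3)$ on the quotient;
\item take $u=\frac16\bigl(s-\sqrt{s^2-6p}\bigr)$;
\item check $l_1=\frac12(\Lambda_2-u)\ge0$.
\end{itemize}
The last check holds because $u\le\Lambda_3$, and it is the paper's condition $b+d\le4$.

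\textbf{The gap is necessity.} Step two only describes a case analysis you have not carried out. The fallback does not reduce the problem. Put $\Omega=\{\Lambda_1<(2+\sqrt3)\Lambda_3\}$ and let $K$ be the image. Wherever the weights-to-eigenvalues map is open, every preimage of a point of $\partial\Omega$ is automatically approached from inside $\Omega$. So your criterion is no easier than the claim itself.

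Compactness, connectedness and the missing center help only if you know where $\partial K$ can come from. That needs two things. First, there must be no interior critical points; otherwise a fold of the nonsymmetric family could bound a piece of $K$ inside $\Omega$. Second, every face of the parameter domain must be checked. That includes $l_1=0$, the star $K_{1,3}$, which your plan omits, and the path face $c=0$ is a required case, not a consistency check.

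You cannot borrow this direction from the paper. Lemma~\ref{kite: lemma which are not attainable} derives its contradiction only from $\Lambda_{2,3}=\frac12\bigl(8-\Lambda\pm\sqrt{\mathcal D}\bigr)$, which every triple summing to $8$ satisfies. Moreover, the second inequality in \eqref{kite: bounds on root of D} should read $8-(5-2\sqrt3)\Lambda>\sqrt{\mathcal D}$. With that correction, $(3,3,2)$ satisfies both inequalities, so there is no contradiction.

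The missing fact is easy in your coordinates. With $l_1=4-a-l_4$ one has
\begin{itemize}
\item $e_2=3(4a-a^2)+3c+4l_4(4-a-l_4)$,
\item $e_3=4l_4\bigl((4-a-l_4)a+c\bigr)$,
\end{itemize}
so the Jacobian determinant of $(c,a)\mapsto(e_2,e_3)$ is $4l_4^2>0$ in the interior. Hence the image of the interior is open. So $\partial K$ lies in the images of the faces: $c=0$ (path), $c=a^2/4$ (symmetric), $l_1=0$ (star), and $l_4=0$ or $a=0$ (a zero eigenvalue).

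Each face misses $\Omega$:
\begin{itemize}
\item The symmetric face does so by the identity above.
\item For the star, take $x=(2+\sqrt3)z$ with $z\le y\le x$. The middle factor of Theorem~\ref{theorem_star} equals $6\bigl(y-\frac{3+\sqrt3}{6}z\bigr)^2>0$, and the other two factors are positive. So the connected star region misses $\partial\Omega$, hence $\Omega$.
\item The path face is checked the same way against Theorem~\ref{theorem_path}.
\end{itemize}
Then $K\cap\Omega$ is open and closed in the convex set $\Omega$. Your observation that $(8/3,8/3,8/3)$ is not attained then gives $K\cap\Omega=\emptyset$.
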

\begin{proof}

 Let $\{l_1, l_2, l_3, l_4\}$ be the weights of edges $v_1v_2$, $v_2v_3$, $v_1v_3$, $v_3v_4$, respectively, in the considered cycle graph on fours vertices. Then, the weighted graph Laplacian has the form

\begin{align}\label{kite_matrix}
\begin{bmatrix}
l_1+l_3 & -l_1 & -l_3 & 0 \\
-l_1 & l_1+l_2 & -l_2 & 0 \\
-l_3 & -l_2 & l_2+l_3+l_4 & -l_4 \\
0 & 0 & -l_4 & l_4 \\
\end{bmatrix}.
\end{align}

Calculating the characteristic equation, we get that, excluding the necessarily $0$ eigenvalue, the rest of eigenvalues must satisfy
\begin{align}\label{kite_equation}
    \lambda^3 - 2 \lambda^2 (l_1+l_2+l_3+l_4) + 3 \lambda (l_1 l_2+l_1l_3+l_2l_3+l_2l_4+l_3l_4) + 4 \lambda (l_1 l_4) \\- 4 (l_1 l_2 l_4 + l_1 l_3 l_4 + l_2 l_3 l_4)=0.\nonumber
\end{align}
Thus, the roots $\lambda_1, \lambda_2, \lambda_3$ of this cubic equation satisfy the following system of equations.

\begin{align}\label{kite: equations for symmetric polynomials in roots}
    \lambda_1+\lambda_2+\lambda_3 & = 2(l_1+l_2+l_3+l_4), \nonumber\\
    \lambda_1\lambda_2 + \lambda_2\lambda_3 + \lambda_1\lambda_3 & = 3(l_1 l_2+l_1l_3+l_2l_3+l_2l_4+l_3l_4) + 4l_1l_4, \\
    \lambda_1\lambda_2\lambda_3 & = 4 (l_1 l_2 l_4 + l_1 l_3 l_4 + l_2 l_3 l_4).\nonumber
\end{align}

Let
\begin{equation}\label{kite: parametrization}
a=l_1l_4, \qquad b=l_2+l_3, \qquad c=l_1l_2+l_1l_3+l_2l_3, \quad\text{and}\quad d=l_4.
\end{equation}

We again apply the normalization that $l_1+l_2+l_3+l_4=4$. Thus, we obtain $l_1+l_4=4-b$ and $a=l_1l_4=(l_1+l_4-l_4)l_4=(4-b-d)d$. We also notice $l_2l_3=c-l_1(l_2+l_3)=c-b(4-b-d)$. In order to make sure we can recover positive real weights $l_1,l_2,l_3,l_4$ from $a,b,c,d$, it is necessary and sufficient to have the following inequalities hold.

\begin{equation}\label{kite: conditions}
 0\leq a,b,d\leq 4, \quad 4a\leq(4-b)^2, \quad 4(c-b(4-b-d))\leq b^2,\quad 0\leq c\leq\frac{16}{3}.
\end{equation}

We simplify the above system of inequalities.

\begin{proposition}\label{kite: prop simplified conditions}
  The system of inequalities in \eqref{kite: conditions} is equivalent to 
  \begin{equation*}
b,d\geq 0, \quad b+d\leq 4, \quad  0\leq c\leq 4b-bd-\frac{3}{4}b^2.
  \end{equation*}
\end{proposition}

\begin{proof}
First, we show that for $b,d\geq 0$, we have $b,d\leq 4$ and $0\leq a\leq 4$ if and only if $b+d\leq 4$. We recall that $a=\Big(4-(b+d)\Big)d$. If $a=0$ then $b+d=4$ or $d=0$ so $b+d=b\leq 4$. Otherwise, if $a>0$ then $d>0$ so $4-(b+d)>0$, i.e., $b+d\leq 4$. On the other hand, if $b,d\geq 0$ and $b+d\leq 4$ then $b,d\leq 4$, $a\geq 0$, and $a\leq (4-d)d\leq 4$.

Moreover, 
\begin{equation*}
(4-b)^2-4a=(4-b)^2-4(4-b-d)d=(b+2d-4)^2\geq 0
\end{equation*}
so $4a\leq (4-b)^2$ for any $b,d$.

Finally,
\begin{equation*}
4(c-b(4-b-d))\leq b^2 \Leftrightarrow c\leq -\frac{3}{4}b^2+b(4-d),
\end{equation*}
where the quadratic polynomial in $b$ is maximized at $b = \frac{2}{3}(4-d)$, so 
\[c\leq -\frac{3}{4}\left(\frac{2}{3}(4-d)\right)^2+\frac{2}{3}(4-d)(4-d)=\frac{1}{3}(4-d)^2\leq \frac{16}{3}\quad\text{as}\quad 0\leq d\leq 4.
\]
\end{proof}

Moreover, using our parametrization, \eqref{kite_equation} can be rewritten as
\begin{equation}\label{kite: cubic equation in terms of parameters}
\lambda^3 - 8 \lambda^2  + \lambda (3c+3bd+4d(4-b-d)) - 4 cd=0
\end{equation}

Consider a given triple $(\Lambda_1,\Lambda_2,\Lambda_3)$ of non-negative numbers summing to $8$. Without loss of generality assume $\Lambda_1\geq\Lambda_2\geq\Lambda_3$. Let $\Lambda_1=\Lambda$. Then, factoring out $(\lambda-\Lambda)$ in \eqref{kite: cubic equation in terms of parameters}, we obtain that the other two eigenvalues are roots of $\lambda^2+(\Lambda-8)\lambda+\Lambda(\Lambda-8)+3c+3bd+4d(4-b-d)=0$ so
\begin{equation}\label{kite: L2,3 equation}
    \Lambda_{2,3} =\frac{8-\Lambda\pm\sqrt{\mathcal{D}}}{2}= \frac{8-\Lambda\pm\sqrt{(\Lambda-8)^2-4(\Lambda(\Lambda-8)+3c+3bd+4d(4-b-d))}}{2}.
\end{equation}

\begin{lemma}\label{kite: lemma which are not attainable}
Assume $(\Lambda_1,\Lambda_2,\Lambda_3)$ above does not satisfy any conditions in the statement of Theorem~\ref{theorem_kite}. Then, $(\Lambda_1,\Lambda_2,\Lambda_3)$ is not attainable by a kite graph.
\end{lemma}
\begin{proof}
Since $\Lambda_1\geq\Lambda_2\geq\Lambda_3\geq0$, the fact that $(\Lambda_1,\Lambda_2,\Lambda_3)$ does not satisfy any conditions in the statement of Theorem~\ref{theorem_kite} is equivalent to 
\begin{equation*}
\sqrt{3}(\Lambda_1-\Lambda_3)<\Lambda_1+\Lambda_3, \quad\text{i.e.,} \quad\Lambda_1<(2+\sqrt{3})\Lambda_3.
\end{equation*}
Using $\Lambda_1=\Lambda$ and the expressions \eqref{kite: L2,3 equation}
 of $\Lambda_{2,3}$, we show that it is impossible to have simultaneously $\Lambda_1\geq\Lambda_2$ and $\Lambda_1<(2+\sqrt 3)\Lambda_3$. Having both of those inequalities is equivalent to having $0\leq \Lambda\leq 8$ such that 
 \begin{equation}\label{kite: bounds on root of D}
3\Lambda-8\geq \sqrt{\mathcal D}\quad\text{and}\quad 8-(5-\sqrt{3})\Lambda> \sqrt{\mathcal D}.
 \end{equation}
We note that $3\Lambda-8\geq 8-(5-\sqrt 3)\Lambda$ if and only if $\Lambda\geq \frac{16}{8-\sqrt 3}$. Moreover, for $\Lambda\geq \frac{16}{8-\sqrt 3}$, we have $8-(5-\sqrt 3)\Lambda\leq -\frac{8}{8-\sqrt 3}(2-\sqrt 3)<0$, and, for $0\leq \Lambda<\frac{16}{8-\sqrt 3}$, we have $3\Lambda-8<-\frac{8}{8-\sqrt 3}(2-\sqrt 3)<0$. Thus, \eqref{kite: bounds on root of D} is not possible, and we have proved the lemma.
\end{proof}

\begin{lemma}\label{kite: lemma which are realizable}
Consider $(\Lambda_1,\Lambda_2,\Lambda_3)$ with $\Lambda_1\geq\Lambda_2\geq\Lambda_3\geq 0$ and $\Lambda_1+\Lambda_2+\Lambda_3=8$. Assume that $\sqrt{3}(\Lambda_1-\Lambda_3)\geq \Lambda_1+\Lambda_3$ (i.e., $\Lambda_1\geq (2+\sqrt 3)\Lambda_3$). Then, $(\Lambda_1,\Lambda_2,\Lambda_3)$ is attainable by a kite graph.
\end{lemma}
\begin{proof}
Let $(\Lambda_1, \Lambda_2, \Lambda_3)$ be as in the statement of the lemma. We show that there exists $b,c,$ and $d$ satisfying the conditions in Proposition~\ref{kite: prop simplified conditions} so that $\Lambda_1,\Lambda_2,\Lambda_3$ are the roots of \eqref{kite: cubic equation in terms of parameters}. In particular, we can take $c=4b-bd-\frac{3}{4}b^2$. Then, we have
\begin{align*}
3c+3bd+4d(4-b-d) & = 3(4b-bd-\frac{3}{4}b^2)+3bd+4d(4-b-d) \\&= \frac{-9b^2}{4} + 12b-4bd+16d-4d^2\\
4cd & = 4(4b-bd-\frac{3}{4}b^2)d = 16bd - 4bd^2-3b^2d
\end{align*}
so the roots of the cubic equation \eqref{kite: cubic equation in terms of parameters} for our choice of $c$ are
\begin{equation*}
\frac{1}{2}\Big(-3b-4d+16\Big), \qquad \frac{3}{4}b+d\pm\frac{1}{4}\sqrt{9b^2-8bd+16d^2}.
\end{equation*}

We show that there exist $b,d\geq 0$ such that $b+d\leq 4$ so that
\begin{align*}
\Lambda_1&=\frac{3}{4}b+d+\frac{1}{4}\sqrt{9b^2-8bd+16d^2},\\
\Lambda_2&=\frac{1}{2}\Big(-3b-4d+16\Big),\\
\Lambda_3&=\frac{3}{4}b+d-\frac{1}{4}\sqrt{9b^2-8bd+16d^2}.
\end{align*}
Since $\Lambda_1+\Lambda_2+\Lambda_3=8$, we just need to show that there exist $b,d\geq 0$ such that $b+d\leq 4$ so that
\begin{align}\label{kite: equation on b and d}
&2(\Lambda_1-\Lambda_3)=\sqrt{9b^2-8bd+16d^2},\\
&2(\Lambda_1+\Lambda_3)=3b+4d.
\end{align}
We note that for any $b,d\geq 0$ we have $3(9b^2-8bd+16d^2)\geq (3b+4d)^2$ as it is equivalent to $2(3b-4d)^2\geq 0$, which aligns with our condition that $\sqrt{3}(\Lambda_1-\Lambda_3)\geq \Lambda_1+\Lambda_3$. Moreover, we have \[2(3b-4d)^2=3\Big(2(\Lambda_1-\Lambda_3)\Big)^2-\Big(2(\Lambda_1+\Lambda_3)\Big)^2.\]
Thus, there are two cases
\begin{align*}
&(b,d)
=\Bigg(\frac{(\Lambda_1+\Lambda_3)\pm\sqrt{(\Lambda_1-\Lambda_3)^2-2\Lambda_1\Lambda_3}}{3},\frac{(\Lambda_1+\Lambda_3)\mp\sqrt{(\Lambda_1-\Lambda_3)^2-2\Lambda_1\Lambda_3}}{4}\Bigg).
\end{align*}
In particular, since $\Lambda_1\geq \Lambda_3\geq 0$, $b,d\geq 0$. We also show that
\begin{equation*}
(b,d)=\Bigg(\frac{(\Lambda_1+\Lambda_3)-\sqrt{(\Lambda_1-\Lambda_3)^2-2\Lambda_1\Lambda_3}}{3},\frac{(\Lambda_1+\Lambda_3)+\sqrt{(\Lambda_1-\Lambda_3)^2-2\Lambda_1\Lambda_3}}{4}\Bigg)
\end{equation*}
satisfies $b+d\leq 4$. Recall that, since $\Lambda_1+\Lambda_2+\Lambda_3=8$ and $\Lambda_1\geq\Lambda_2\geq\Lambda_3\geq 0$, we have that $\Lambda_1+2\Lambda_3\leq 8$. Thus,
\begin{equation}\label{kite: equality on b+d}
12(b+d)=7(\Lambda_1+\Lambda_3)-\sqrt{(\Lambda_1-\Lambda_3)^2-2\Lambda_1\Lambda_3}.
\end{equation}
In particular, if $\Lambda_1+\Lambda_3\leq \frac{48}{7}$, then $b+d\leq 4$. Assume now that $\Lambda_1+\Lambda_3>\frac{48}{7}$ so $0\leq\Lambda_3<\frac{8}{7}$. 
Let 
\begin{align*}
f(\Lambda_1,\Lambda_3)&=\Bigg(7(\Lambda_1+\Lambda_3)-48\Bigg)^2-\Bigg((\Lambda_1-\Lambda_3)^2-2\Lambda_1\Lambda_3\Bigg)\\
&=6\Bigg(8\Lambda_1^2-(112-17\Lambda_3)\Lambda_1+8(\Lambda_3^2-14\Lambda_3+48)\Bigg).
\end{align*}
We have that for fixed $\Lambda_3$, $f(\Lambda_1,\Lambda_3)$  decreases if $\Lambda_1\leq\frac{112-17\Lambda_3}{16}$ and increases if $\Lambda_1\geq\frac{112-17\Lambda_3}{16}$. Also, we observe that $\frac{112-17\Lambda_3}{16}>\frac{48}{7}-\Lambda_3$. Note that $f(8-2\Lambda_3,\Lambda_3) = 12\Lambda_3(3\Lambda_3-4)<0$ for $\Lambda_3<\frac{8}{7}<\frac{4}{3}$. Moreover,
\[
f\Big(\frac{48}{7}-\Lambda_3,\Lambda_3\Big)=-6\Bigg(\Lambda_3^2-\frac{48}{7}\Lambda_3+\frac{384}{49}\Bigg)\leq -6\Bigg(\left(\frac{8}{7}\right)^2-\frac{48}{7}\cdot\frac{8}{7}+\frac{384}{49}\Bigg)=-6\cdot\frac{64}{49}<0.
\]
Thus, $f(\Lambda_1,\Lambda_3)<0$ for $8-2\Lambda_3\geq \Lambda_1>\frac{48}{7}-\Lambda_3$ and $\Lambda_3\geq 0$. Therefore, using \eqref{kite: equality on b+d}, we have $12(b+d)\leq 48$, i.e., $b+d\leq 4$. Finally, since $b+d\leq 4$, we have 
\[c = b(4-d-\frac{3}{4}b)\geq b(4-(4-b)-\frac{3}{4}b) = \frac{1}{4}b^2\geq 0,\] so $c$ is nonnegative as required by Proposition~\ref{kite: prop simplified conditions}. As a result, we found $b,c$, and $d$ so that the conditions in Proposition~\ref{kite: prop simplified conditions} hold the lemma holds.
\end{proof}

Combining Lemmas~\ref{kite: lemma which are not attainable} and \ref{kite: lemma which are realizable} and considering any possible order of eigenvalues and normalization, we obtain Theorem~\ref{theorem_kite}.
\end{proof}

\section{Complete graphs}\label{sec: complete graph}
The complete graph on $n$ vertices, $K_n$, is a graph with any two vertices connected by an edge. In this section, using the argument in \cite{CdV88}, we show that

\begin{theorem}\label{theorem_kn}
    For any $n$-tuple $\bm\Lambda=(\Lambda_1, \dots, \Lambda_n)$ of $n$ nonnegative numbers, there exists a weight function $w$ on the edges of the complete graph $K_{n+1}$ on $n+1$ vertices such that $(K_{n+1}, w)$ has eigenvalues exactly corresponding to $\bm\Lambda$.
\end{theorem}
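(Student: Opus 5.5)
We realize the prescribed spectrum by conjugating a diagonal matrix into the Laplacian form, as in \cite{CdV88}. Here the graph Laplacian of $K_{n+1}$ is the unnormalized matrix used in the previous sections: $L_w = \sum_{i<j} w_{ij}(e_i-e_j)(e_i-e_j)^T$, with all vertex areas equal. A symmetric $(n+1)\times(n+1)$ matrix $M$ is of the form $L_w$ with $w\ge 0$ exactly when three conditions hold: the constant vector $\mathbf 1$ lies in its kernel, so row sums vanish; its off-diagonal entries are $\le 0$; and $w_{ij}=-M_{ij}$. Since every pair of vertices in $K_{n+1}$ is an edge, every off-diagonal entry is available, and a zero weight is allowed.

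The plan is therefore to find an orthonormal basis $u_1,\dots,u_n$ of $\mathbf 1^\perp\subset\mathbb R^{n+1}$ such that
\[
M=\sum_{k=1}^n \Lambda_k u_k u_k^T
\]
has nonpositive off-diagonal entries. This $M$ is symmetric, kills $\mathbf 1$, and has spectrum $\{0,\Lambda_1,\dots,\Lambda_n\}$. By scaling we may assume all $\Lambda_k\le 1$, or normalize in any convenient way. The cleanest choice I would try first is to start from the regular simplex. The vectors $p_i=e_i-\frac{1}{n+1}\mathbf 1$, $i=1,\dots,n+1$, satisfy $\langle p_i,p_j\rangle=-\frac{1}{n+1}$ for $i\neq j$.

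For a general symmetric positive semidefinite $A$ on $\mathbf 1^\perp$ with spectrum $\Lambda$, set $M_{ij}=\langle A p_i,p_j\rangle$ (in fact $M_{ij}=A_{ij}$ when $A$ is extended by zero on $\mathbf 1$). For $A$ we take $A=O\,\mathrm{diag}(\Lambda)\,O^T$, where $O$ ranges over the orthogonal group of $\mathbf 1^\perp$. The main obstacle is showing that some $O$ makes every off-diagonal entry $M_{ij}$ nonpositive.

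My approach to this obstacle is an averaging/perturbation argument. Take $A_0=\bar\Lambda\, P$, where $\bar\Lambda$ is the mean of the $\Lambda_k$ and $P$ is the projection onto $\mathbf 1^\perp$. For $A_0$ every off-diagonal entry equals $-\bar\Lambda/(n+1)<0$ (assuming $\Lambda\neq 0$; the zero tuple is trivial). This spectrum can be deformed to a general one, but strict negativity survives only locally, so a naive perturbation works only near the constant spectrum.

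For the global statement I would instead follow Colin de Verdière's idea: induct on $n$ using the suspension structure of $K_{n+1}$, namely that $K_{n+1}$ is $K_n$ plus a vertex joined to all others. Suppose $K_n$ realizes $(\Lambda_1,\dots,\Lambda_{n-1})$ with $\Lambda_n$ the largest value. Adding the new vertex with edges of weight $\varepsilon$ to every old vertex adds $\varepsilon$ times the Laplacian of the star $K_{1,n}$. That Laplacian acts on $\mathbf 1^\perp$ as the identity on vectors vanishing at the new vertex and as $(n+1)$ on the vertex direction. Hence the old eigenvalues shift to $\Lambda_k+\varepsilon$, and the new eigenvalue is $(n+1)\varepsilon$.

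Using this interlacing-style shift together with scaling, I would first realize $(\Lambda_1-\varepsilon,\dots,\Lambda_{n-1}-\varepsilon)$ on $K_n$. This is possible by induction if we order so that $\Lambda_n$ is the largest and choose $\varepsilon=\Lambda_n/(n+1)\le \min$? This fails in general, so I would fix it by instead choosing the new weights $w_{i,n+1}$ to be non-uniform. The requirement then becomes an inverse problem of rank-one-update type: the new eigenvalues are the roots of the secular equation
\[
1=\sum_k \frac{c_k}{\Lambda-\mu_k},
\]
and by the classical Cauchy interlacing construction (as in \cite{CdV88}), any spectrum interlacing the old one is attained with nonnegative weights. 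Checking that the resulting weights are nonnegative is the step I expect to require the most care.
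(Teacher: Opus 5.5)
\textbf{There is a genuine gap, and it comes from one wrong choice: order so that $\Lambda_n$ is the smallest entry, not the largest.}

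Your suspension computation is correct and is exactly the paper's key lemma. Adding a vertex joined to all old vertices with uniform weight $\varepsilon$ shifts every old nonzero eigenvalue by $\varepsilon$ and creates the new eigenvalue $(n+1)\varepsilon$. You then take $\Lambda_n$ to be the \emph{largest} entry, observe correctly that $\varepsilon=\Lambda_n/(n+1)$ may exceed some $\Lambda_k$, and abandon the uniform construction.

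The fix is to take $\Lambda_n=\min_k\Lambda_k$. Then $\varepsilon=\Lambda_n/(n+1)\le\Lambda_n\le\Lambda_k$ for every $k$, so $(\Lambda_1-\varepsilon,\dots,\Lambda_{n-1}-\varepsilon)$ is a nonnegative $(n-1)$-tuple. By induction it is realizable on $K_n$; the base case is $K_2$ with weight $\Lambda_1/2$. Suspending with uniform weight $\varepsilon$ then gives exactly $(\Lambda_1,\dots,\Lambda_{n-1},(n+1)\varepsilon)=\bm\Lambda$. The case $\Lambda_n=0$ is included: $\varepsilon=0$ and the new vertex is isolated. This is precisely the paper's proof.

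As written, your replacement step is not a proof. Let $B$ be the Laplacian on the old vertices. Attaching the new vertex with non-uniform weights $w_i$ produces
\[
\begin{bmatrix} B+\mathrm{diag}(w) & -w\\ -w^{T} & \sum_i w_i\end{bmatrix}.
\]
This is not a rank-one update of $B$: the added weighted star Laplacian has rank equal to the number of positive $w_i$. It is also not a bordering of $B$ itself, since the diagonal of the old block changes as well. So the secular equation $1=\sum_k c_k/(\Lambda-\mu_k)$ and Cauchy interlacing do not apply in the form you invoke. Nonnegativity of the resulting weights, which you yourself flag as the delicate point, is never established.

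Your first approach, finding an orthonormal basis of $\mathbf 1^\perp$ with all off-diagonal entries of $\sum_k\Lambda_k u_ku_k^T$ nonpositive, only restates the theorem. The perturbation from $\bar\Lambda P$ is, as you note, only local. None of these routes is needed once the ordering is reversed.
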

To find the eigenvalues attained by complete graphs on $n$ vertices, we introduce the concept of graph suspensions. 
\begin{definition}
    The suspension of a graph $G$ with vertices $V(G)=\{v_1, v_2, \dots, v_n\}$ is the graph $G'$ with $V(G')=V(G)\cup\{v_{n+1}\}$ and $E(G')=E(G)\cup\{v_iv_{n+1}: 1\leq i\leq n\}$. 
\end{definition}

In \cite[\S 4]{CdV88}, Colin de Verdière described the eigenvalues of the suspension of the graph where weights are assigned not only to edges but the vertices as well and equal weights are assigned to the added edges in the suspension. In Theorem~\ref{theorem_kn}, we do not have various weights on vertices so we provide Colin de Verdière's argument here for our situation for completeness as the argument is simpler for our case.

\begin{lemma}(compare with \cite[\S 4]{CdV88})\label{complete graph: lemma suspension}
    If a weighted graph $G$ attains the nonnegative eigenvalues $(\lambda_1, \dots, \lambda_n)$, then its suspension with the added edges having weight $c\geq 0$ will attain the eigenvalues \[(\lambda_1+c, \lambda_2+c, \dots, \lambda_n+c, c(n+1)).\]
\end{lemma}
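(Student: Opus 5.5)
The plan is to write the Laplacian of the suspension in block form and diagonalize it using the eigenbasis of $L_G$. Let $L_G$ be the $n\times n$ weighted Laplacian of $G$. Its eigenvalues are $\lambda_1,\dots,\lambda_n$; one of them is $0$, with eigenvector $\mathbf 1_n$. Because $L_G$ is symmetric, we may choose an orthonormal eigenbasis $u_1,\dots,u_n$, where $L_Gu_i=\lambda_iu_i$ and we take $u_1=\mathbf 1_n/\sqrt n$ with $\lambda_1=0$.

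Adding the vertex $v_{n+1}$ joined to every $v_i$ by an edge of weight $c$ gives the Laplacian
\[
L'=\begin{bmatrix} L_G+cI_n & -c\mathbf 1_n\\ -c\mathbf 1_n^T & nc\end{bmatrix}.
\]

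\textbf{Step 1: eigenvectors orthogonal to $\mathbf 1_n$.} Take $u_i$ with $i\ge 2$. Then $u_i\perp\mathbf 1_n$, so $\mathbf 1_n^Tu_i=0$. The vector $(u_i,0)$ therefore satisfies $L'(u_i,0)=((\lambda_i+c)u_i,\,0)$, so it is an eigenvector with eigenvalue $\lambda_i+c$.

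\textbf{Step 2: the remaining two eigenvalues.} The remaining two-dimensional invariant subspace is spanned by $(\mathbf 1_n,0)$ and $(0,1)$. One eigenvector is the all-ones vector $\mathbf 1_{n+1}$, with eigenvalue $0$. The other is $(\mathbf 1_n,-n)$. Using $L_G\mathbf 1_n=0$, we get
\[
L'(\mathbf 1_n,-n)=\big(c\mathbf 1_n+nc\mathbf 1_n,\;-nc-n^2c\big)=c(n+1)(\mathbf 1_n,-n),
\]
so its eigenvalue is $c(n+1)$.

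\textbf{Step 3: matching the statement.} The $0$ eigenvalue of the suspension corresponds to $\mathbf 1_{n+1}$. The statement's list $(\lambda_1+c,\dots,\lambda_n+c,c(n+1))$ has $n+1$ entries, and the suspension has $n+1$ eigenvalues in total, so the list must include the $0$. The index convention is that $G$ has $n$ vertices and eigenvalues $\lambda_1,\dots,\lambda_n$ with, say, $\lambda_1=0$. Under that reading, $\lambda_1+c=c$ is the image of $G$'s zero eigenvalue, the entries $\lambda_i+c$ for $i\ge 2$ come from Step 1, and $c(n+1)$ comes from Step 2. The suspension's own $0$ eigenvalue is then not listed, which matches the convention elsewhere in the paper of dropping the trivial $0$. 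This reconciles the counts.

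The only real subtlety is this bookkeeping of which zero eigenvalue is suppressed; the linear algebra itself is routine. Nonnegativity of $c$ is needed only so that the suspension is a legitimate weighted graph.
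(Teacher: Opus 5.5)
Your Steps 1 and 2 are the paper's argument. You write the suspension Laplacian in block form, pad each eigenvector of $L_G$ orthogonal to $\mathbf 1_n$ with a $0$ to get $\lambda_i+c$, and use the vector $(\mathbf 1_n,-n)$ for $c(n+1)$. As in the paper, the padding step also sends any extra zero eigenvalues of a disconnected $G$ to $c$. That part is correct, and your Step 2 even covers $c=0$ without the separate case the paper uses.

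Step 3, however, is false. For $c>0$ the vector $(\mathbf 1_n,0)$ is not an eigenvector of $L'$, since $L'(\mathbf 1_n,0)=(c\mathbf 1_n,-nc)$. Your own Step 2 shows that the plane spanned by $(\mathbf 1_n,0)$ and $(0,1)$ carries only the eigenvalues $0$ and $c(n+1)$. Together with the $n-1$ eigenvalues from Step 1, that is already the whole spectrum of the $(n+1)\times(n+1)$ matrix $L'$. So the trivial zero of $G$ does not become an eigenvalue $c$ of the suspension. Your list $(c,\lambda_2+c,\dots,\lambda_n+c,c(n+1))$ therefore has one entry too many to be the suspension's spectrum with its $0$ removed. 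For example, $G=K_2$ with edge weight $w$ has spectrum $\{0,2w\}$, and its suspension has spectrum $\{0,2w+c,3c\}$. Your reading instead predicts three nontrivial eigenvalues $(c,2w+c,3c)$ on a three-vertex graph.

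The right fix is to reindex the statement, not to invent an eigenvalue. This matches the paper's convention of discarding the trivial $0$, and how the lemma is used in the proof of Theorem~\ref{theorem_kn}: there $K_n$ carries the $n-1$ values $\lambda_i-c$, and the new eigenvalue is $c(n+1)=\lambda_n$. So for an $n$-vertex graph $G$, the $\lambda_i$ are the $n-1$ eigenvalues of $L_G$ on $\mathbf 1_n^{\perp}$. The suspension's nontrivial eigenvalues are these shifted by $c$, together with $c(n+1)$. (If you insist on $n$ nontrivial eigenvalues $\lambda_1,\dots,\lambda_n$, then $G$ has $n+1$ vertices and the new eigenvalue is $c(n+2)$.) Your Steps 1 and 2 prove exactly this corrected statement. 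Step 3 should say so instead of claiming that $c$ is an eigenvalue coming from $G$'s constant eigenvector.
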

\begin{proof}
Let $B_n$ be an $n\times n$ weighted Laplacian matrix for the graph $G$. Recall that the Laplacian has the form (degree)-(adjacency matrix), so the Laplacian for the suspension $G'$ obtained by adding edges of weight $c$ will have the form 
$$B' = \begin{bmatrix}
    & & &-c\\
    & B_n+cId_n& & \dots\\
    & && -c&\\
    -c & \dots & -c& cn
\end{bmatrix}$$
Suppose $\bm v=\langle
    v_1, v_2, \dots, v_n
\rangle$ is an eigenvector of $B_n$ with eigenvalue $\lambda_{\bm v}$. Recall that the vector $\textbf{1}_n$ with $n$ entries being all $1$ is an eigenvector of $B_n$ with eigenvalue $0$. Then, since $B_n$ is symmetric, then either $\lambda_{\bm v}\neq 0$ so $\bm v$ and $\textbf{1}_n$ are orthogonal or we can assume that $\bm v$ is orthogonal to $\textbf{1}_n$. Consider $\bm v'=\langle v_1, v_2, \dots, v_n, 0\rangle$. We show that $\bm v'$ is an eigenvector of $B'$.

Indeed, 

$$B'\bm v'=\begin{bmatrix}
    & & &-c\\
    & B_n+cId_n& & \dots\\
    & && -c&\\
    -c & \dots & -c& cn
\end{bmatrix}\begin{bmatrix}
    v_1\\v_2\\\dots\\v_n\\0
\end{bmatrix} 
=\begin{bmatrix}
    \lambda_{\bm v} v_1+cv_1\\
    \lambda_{\bm v} v_2 + cv_2\\
    \dots\\
    \lambda_{\bm v} v_n + cv_n\\
    -c\cdot \sum_{i=1}^n v_i
\end{bmatrix}$$

We note that $\sum_{i=1}^n v_i = \bm v\cdot \textbf{1}_n=0$ as $\bm v$ and $\textbf{1}_n$ are orthogonal. Thus, we get $B'\bm v'=(\lambda_{\bm v}+c)\bm v'$, so $\bm v'$ is an eigenvector of $B'$ with eigenvalue $\lambda_{\bm v}+c$. Repeating this process with the other eigenvectors of $B_n$ that have nonzero eigenvalues, we will obtain eigenvalues $\lambda_1+c, \dots, \lambda_n+c$. Moreover, if $c=0$, then $\langle0,\ldots, 0, 1\rangle$ is an eigenvector , and if $c>0$, we will obtain a new nonzero eigenvalue. 

Assume $c>0$. 
Consider a vector $\bm w=\langle1,\dots,1, y\rangle$ for some $y$. We show that there exist $y\neq 0$ and $\mu$ such that $B'\bm w=\mu \bm w$. Expanding $B'\bm w$, we get $$\langle -c(y-1), -c(y-1),\dots,-c(y-1), nc(y-1)\rangle$$ and we want this to be equal to $\langle\mu, \mu, \dots, \mu, \mu y\rangle$. Solving, we have $\mu=-c(y-1)$ and $y=-n$ or $y=1$. We exclude $y=1$ as that is the constant vector case leading to the zero eigenvalue, so we can choose $y=-n$ and $\mu=c(n+1)$.

Thus, we obtain the lemma.
\end{proof}

We now use this lemma to prove Theorem \ref{theorem_kn}.
\begin{proof}
    We prove this result by induction. The base case, that $K_2$ attains any nonnegative eigenvalue. The $2\times2$ matrix $\begin{bmatrix}\frac{\lambda_1}{2}&-\frac{\lambda_1}{2}\\-\frac{\lambda_1}{2}&\frac{\lambda_1}{2}\end{bmatrix}$ has eigenvalues $\lambda_0=0$ and $\lambda_1$ (see also \cite[Observation 2.1]{FGL} ). We now argue the inductive step. 

Suppose that any list of $n-1$ nonnegative numbers is attainable by a weighted $K_n$ graph. We recall that we allow zero weights for edges. Let $(\lambda_1,\lambda_2,\ldots,\lambda_{n-1},\lambda_n)$ be a given list of $n$ nonnegative numbers that we want to realize as the eigenvalues of weighted $K_{n+1}$ graph. We order the list so that $\lambda_1\geq \lambda_2\geq\ldots\geq \lambda_n$. We consider $K_n$ as a suspension of $K_{n-1}$.

Assume $\lambda_n=0$. Let $(K_{n-1},w_{0})$ be a weighted graph with eigenvalues $(\lambda_1,\ldots,\lambda_n)$. Then, we construct a weighted $K_n$ graph by assigning weights according to $w_0$ on a $K_{n-1}$ subgraph and $0$ to all other edges. By Lemma~\ref{complete graph: lemma suspension}, we obtain a weighted $K_n$ graph with desired eigenvalues.

If $\lambda_n\neq 0$, we choose $c$ such that $c(n+1)=\lambda_n$, i.e., $c=\lambda_n/(n+1)$. We note that $\lambda_i-c\geq 0$ for all $i\in\{1,2,\ldots, n-1\}$ as $\lambda_i\geq \lambda_n\geq 0$. By the inductive hypothesis, there exists a weight function $w$ on $K_{n}$ such that $(K_{n}, w)$ attains the eigenvalues $(\lambda_1-c, \dots, \lambda_{n-1}-c)$. We construct a weighted $K_{n+1}$ graph by assigning weights according to $w$ on a $K_{n}$ subgraph and $c$ to all other edges. By Lemma~\ref{complete graph: lemma suspension}, we obtain a weighted $K_{n+1}$ graph with desired eigenvalues.
\end{proof}

\section{Open questions}\label{sec: open questions}
In this section, we formulate several natural questions motivated by the results in this paper.

\begin{question}
What lists of $2g-3$ positive numbers in $[0,1/4]$ can be realized as the first 
$2g-3$ eigenvalues of the hyperbolic Laplacian 
on some Riemann surface of genus $g$?
\end{question}

For degenerating Riemann surfaces, that question 
is closely connected to the {\em Inverse Eigenvalue 
Laplace Problem} formulated in \cite{FGL}: 
\begin{question}
For a given connected 
graph $G$ 
on $n$ vertices, what lists of
$(n-1)$ nonnegative numbers can 
be realized as eigenvalues of a (weighted) 
graph Laplacian, for all possible nonegative weights 
assigned to the edges of $G$? 
\end{question}
 
We note that the complete graph $K_n$ is the only graph which realizes any list of $(n-1)$ nonnegative numbers as the eigenvalues of $K_n$ with some weight function \cite[\S4]{CdV88}, \cite[Thm. 2.5]{FGL}. Below we formulate a question and conjectures on how the set of realizable eigenvalues grow as we add an edge to a non-complete graph or consider its suspension.

Consider a graph $G$ on $n$ vertices. Let \[R=\{(\lambda_1,\lambda_2,\ldots,\lambda_{n-1})\,|\, \lambda_1+\lambda_2+\ldots+\lambda_{n-1}=1,\, \lambda_i\geq 0\,\text{for all}\, i\in\{1,\ldots, n-1\}\}.\]
We denote by $P(G)$ the ratio of the area of realizable $n-1$ eigenvalues of $G$ within $R$ to the area of $R$.

\begin{question}
Consider a non-complete graph $G$ on $n$ vertices. Let $G'$ be a graph obtained from $G$ by adding an edge. Is there a uniform (in $G$) positive lower bound on $P(G')-P(G)$?
\end{question}

\begin{question}
Let $G$ be a non-complete graph and $G'$ be the suspension of $G$. Is $P(G')\geq P(G)$? 
\end{question}

\begin{question}
 Let $G_0$ be a non-complete graph. Consider a sequence of graphs $\{G_n\}_{\mathbb N}$ such that $G_{n+1}$ is the suspension of $G_n$ for all $n$. Is $\lim\limits_{n\rightarrow\infty}P(G_n)=1$?
\end{question}
\bibliographystyle{alpha}
\bibliography{bibliography}
\end{document}